\numberwithin{equation}{section} 
\newtheorem{teo}{Theorem}[section]
\newtheorem{lemma}[teo]{Lemma}
\theoremstyle{definition}
\newtheorem{defin}[teo]{Definition}
\def\Oo{\mathcal O}
\def\R{\mathbb{R}}
\def\N{\mathbb N}
\def\G{\mathbb{G}}
\newcommand{\m}{\mbox}
\newcommand{\cor}{\textit}
\newcommand{\fine}{\qed\newline}
\DeclareMathOperator{\supp}{supp}
\newcommand{\de}{\partial}
\begin{document}
 	\title[Crtitical Concave-convex problems in Carnot groups]{Critical concave-convex problems in Carnot groups}

 	\author[M.\,Galeotti]{Mattia Galeotti}
 	\author[E.\,Vecchi]{Eugenio Vecchi}

 	\address[M.\,Galeotti]{Dipartimento di Matematica
 		\newline\indent Università  di Bologna \newline\indent
 		Piazza di Porta San Donato 5, 40126 Bologna, Italy}
 	\email{mattia.galeotti4@unibo.it}
 	
 	\address[E.\,Vecchi]{Dipartimento di Matematica
 		\newline\indent Università  di Bologna \newline\indent
 		Piazza di Porta San Donato 5, 40126 Bologna, Italy}
 	\email{eugenio.vecchi2@unibo.it}
 	
 	\keywords{PDEs on Carnot groups, Critical equations, Sublinear equations}
 	
 	\subjclass[2020]{35R03, 35B33, 35H20, 35J70}
 	
 	\date{\today}
 	
 	\thanks{E.V. and M.G.are
 		members of GNAMPA-Indam. E.V. is
 		partially 
 		supported by the PRIN 2022 project 2022R537CS \emph{$NO^3$ - Nodal Optimization, NOnlinear elliptic equations, NOnlocal geometric problems, with a focus on regularity}, founded by the European Union - Next Generation EU and by the Indam-GNAMPA project CUP E5324001950001 - {\em Problemi singolari e degeneri: esistenza, unicità e analisi delle proprietà qualitative delle soluzioni}.
 		M.G. is supported by the National Recovery and Resilience Plan (NRRP), research funded by the European Union – NextGenerationEU.
 		CUP D93C22000930002, {\em A multiscale integrated approach to the study of the nervous system in health and disease (MNESYS)}}

 \begin{abstract}
 We consider a model Dirichlet problem with concave-convex and critical nonlinearity settled in Carnot groups. Our aim is to prove the existence of two positve solutions in the spirit of a famous result by Ambrosetti, Brezis and Cerami. To this aim we use a variational Perron method combined with proper estimates of a family of functions which are minimizers of the relevant Sobolev inequality. Due to the lack of boundary regularity, we also have to be careful while proving that the first solution found is a local minimizer in the proper topology.
 \end{abstract}

\maketitle

\section{Introduction}
Let $\mathbb{G}$ be a Carnot group and let $\Omega \subset \mathbb{G}$ be an open, bounded and connected set with Lipschitz boundary $\partial \Omega$. Let $q \in (0,1)$, let $2^{\star}_{Q}:=\tfrac{2Q}{Q-2}$ be the critical Sobolev exponent related to the Sobolev inequality in $\mathbb{G}$, and let $\lambda >0$. We consider the following Dirichlet boundary value problem
\begin{equation}\tag{{$\mathrm{P}_{\lambda}$}}\label{EqProblem}
	\left\{\begin{array}{rll}
		-\Delta_{\mathbb{G}}u& = \lambda\, u^{q} + u^{2^{\star}_{Q}-1} & \textrm{ in } \Omega,\\
		u&>0 & \textrm{ in } \Omega,\\
		u&=0 & \textrm{ on } \partial \Omega.
	\end{array}\right.
\end{equation}

We stress that $-\Delta_{\mathbb{G}}$ denotes here the sub-laplacian on $\mathbb{G}$ which is a second-order differential operator with non-negative characteristic form that can be explicitly expressed as a sum of squares of vector fields satisfying the H\"{o}rmander condition, see e.g. \cite{Hormander}. We refer to Section~\ref{sec:Prel} for more details, including the Folland-Stein Sobolev spaces we will work with.\\
%
We immediately state the main result of this paper. In what follows,
we refer to Definition \ref{def:weak_sub_super_sol} for the precise definition of \emph{weak solution}
of \eqref{EqProblem}.
\begin{teo} \label{thm:main}
	Let $\Omega\subset\mathbb{G}$ be an open, bounded and connected
	set
	with Lipschitz boundary $\partial \Omega$, and let $q\in (0,1)$. Then, there exists $\Lambda > 0$
	such that
	\begin{itemize}
		\item[A)]problem \eqref{EqProblem} does not admit weak solutions
		for every $\lambda>\Lambda$;
		\item[B)] problem \eqref{EqProblem} admits at least one weak solution for every $\lambda \in (0,\Lambda]$;
		\item[C)] problem \eqref{EqProblem} admits at least two weak solutions for every $0<\lambda<\Lambda$.
	\end{itemize}
\end{teo}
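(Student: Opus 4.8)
The plan is to follow the classical Ambrosetti–Brezis–Cerami strategy, carefully adapted to the sub-Riemannian setting. First I would set up the energy functional
\[
I_\lambda(u) = \frac{1}{2}\int_\Omega |\nabla_{\mathbb{G}} u|^2 - \frac{\lambda}{q+1}\int_\Omega (u^+)^{q+1} - \frac{1}{2^\star_Q}\int_\Omega (u^+)^{2^\star_Q}
\]
on the Folland–Stein space $S^{1,2}_0(\Omega)$, whose critical points (after a maximum-principle argument showing positivity) are exactly the weak solutions of \eqref{EqProblem}$_\lambda$. For part \textbf{A)}, I would test the equation against the first eigenfunction $\varphi_1>0$ of $-\Delta_{\mathbb{G}}$ on $\Omega$: if $u$ solves \eqref{EqProblem}$_\lambda$, then $\lambda_1\int u\varphi_1 = \int(\lambda u^q + u^{2^\star_Q-1})\varphi_1$, and since the map $t\mapsto \lambda t^q + t^{2^\star_Q-1} - \lambda_1 t$ is bounded below by a positive constant $c(\lambda)>0$ once $\lambda$ is large enough (the concave and convex terms together dominate the linear one uniformly), one gets a contradiction; this yields a finite $\Lambda$. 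For the set of admissible $\lambda$ to be an interval one shows that solvability at $\lambda_0$ implies solvability at every $\lambda<\lambda_0$ by a sub/supersolution argument: a solution at $\lambda_0$ is a supersolution at $\lambda$, and a small multiple of $\varphi_1$ is a subsolution, so the variational Perron method of the earlier sections produces a solution at $\lambda$; then $\Lambda:=\sup\{\lambda>0 : \eqref{EqProblem}_\lambda \text{ solvable}\}$, and the main work of part \textbf{B)} is showing that the supremum is attained, which again uses a monotone iteration / Perron argument with uniform a priori bounds (obtained by testing against the solutions for $\lambda<\Lambda$ and using that the corresponding energies stay bounded).

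For part \textbf{C)}, the first solution $u_\lambda$ for $0<\lambda<\Lambda$ is obtained as a \emph{local minimizer} of $I_\lambda$: pick $\mu$ with $\lambda<\mu<\Lambda$, let $u_\mu$ be a solution at level $\mu$, and minimize $I_\lambda$ over the closed convex set $\{0\le u\le u_\mu\}$ — this minimum $u_\lambda$ is a solution lying strictly between $\underline{u}$ and $u_\mu$, with $I_\lambda(u_\lambda)<0$. The delicate point, flagged in the abstract, is to upgrade $u_\lambda$ from a minimizer in the $S^{1,2}_0$-topology constrained by the order interval to a genuine \emph{local} minimizer of $I_\lambda$ in the full space; in the Euclidean case this uses $C^1$ boundary regularity (Brezis–Nirenberg), which is unavailable here, so I would instead argue by a truncation/comparison method in the spirit of the $C^1$-versus-$H^1$ local minimizer results, showing $u_\lambda$ is in the interior of the order interval in a suitable weighted topology (e.g. using the intrinsic distance to $\partial\Omega$ and Hopf-type lemmas for $-\Delta_{\mathbb{G}}$), or alternatively by a direct contradiction argument producing a sequence of smaller-energy functions forced into the order interval. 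This is the step I expect to be the main obstacle.

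Once $u_\lambda$ is known to be a local minimizer, the second solution comes from the mountain pass theorem applied to $I_\lambda$ around $u_\lambda$: one writes $u = u_\lambda + v$ with $v\ge 0$, checks the mountain-pass geometry (the local-minimizer property gives a strict local min at $v=0$, and $I_\lambda(u_\lambda + t e)\to -\infty$ along a fixed positive direction $e$ because of the supercritical-in-$t$ growth of the $2^\star_Q$ term), and obtains a Palais–Smale sequence at the mountain-pass level $c_\lambda$. The crucial estimate — and the reason the minimizers $U_\varepsilon$ of the Folland–Stein Sobolev inequality (the Jerison–Lee extremals) enter — is that $c_\lambda < I_\lambda(u_\lambda) + \frac{1}{Q}S^{Q/2}$, where $S$ is the best Sobolev constant on $\mathbb{G}$; this is proved by estimating $\sup_{t\ge 0} I_\lambda(u_\lambda + t U_\varepsilon)$ using the precise decay/concentration expansions of $U_\varepsilon$ (here one must check that $Q>4$, or handle lower dimensions separately, exactly as in Brezis–Nirenberg). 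Below that threshold the Palais–Smale condition holds, so the PS sequence converges (up to a subsequence) to a second critical point $\widetilde u_\lambda \neq u_\lambda$ since $I_\lambda(\widetilde u_\lambda) = c_\lambda > I_\lambda(u_\lambda)$; a final maximum-principle argument gives $\widetilde u_\lambda>0$ in $\Omega$, completing the proof.
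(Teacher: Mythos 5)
Your parts A) and B) track the paper closely: finiteness of $\Lambda$ via testing with the first eigenfunction, solvability for all smaller $\lambda$ via a sub/supersolution Perron scheme (the paper takes the solution of the purely sublinear problem $-\Delta_{\mathbb{G}}u=\lambda u^q$ as subsolution rather than $\varepsilon\varphi_1$, but both work), and attainment at $\lambda=\Lambda$ by an increasing sequence $\lambda_k\to\Lambda$ whose solutions have negative energy, hence are bounded in $S^1_0(\Omega)$.

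In part C), however, there are two genuine gaps. First, you assert that the local-minimizer property of $u_\lambda$ "gives a strict local min", and you build the mountain-pass geometry on that. This does not follow: a local minimizer need not be strict, and the delicate situation is exactly when $\inf_{\|u-u_\lambda\|_{S^1_0(\Omega)}=r}I_\lambda(u)=I_\lambda(u_\lambda)$ for \emph{every} small $r$, in which case your mountain-pass level may coincide with $I_\lambda(u_\lambda)$ and the standard MPT produces nothing new. The paper follows Tarantello's dichotomy: in the degenerate case it applies Ekeland's variational principle on an annulus inside the cone $H_\lambda=\{u\ge u_\lambda\}$ and proves by hand that the resulting sequence converges strongly to a solution $w_\lambda$ with $\|w_\lambda-u_\lambda\|_{S^1_0(\Omega)}=r$ (hence $w_\lambda\ne u_\lambda$), with no need of the threshold; only in the non-degenerate case does it run the minimax argument with the estimate $\ell_0<I_\lambda(u_\lambda)+\tfrac1Q S_{\mathbb{G}}^{Q/2}$ obtained from the truncated extremals $U_\varepsilon$. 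Your write-up covers only the second case. Second, the local-minimizer lemma itself is left essentially open: your first suggested route (weighted topology via the distance to $\partial\Omega$ and Hopf-type lemmas) is precisely what is obstructed at characteristic points — this is the very reason the Brezis–Nirenberg $H^1$-versus-$C^1$ result has no analogue here — while your alternative "direct contradiction" is indeed the paper's route (Alama, via Abdellaoui–Dieb–Valdinoci) but requires the actual mechanism: truncating the competitors $v_n$ with a solution $\overline u$ at a larger parameter $\overline\lambda$, showing the measure of $\{v_n>\overline u\}$ tends to $0$, and absorbing the error terms through a Picone-type inequality for $-\Delta_{\mathbb{G}}$ together with Taylor and Sobolev estimates, so that $I_\lambda(v_n)<I_\lambda(u_\lambda)$ forces $v_n^-=0$ and a contradiction. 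A minor further point: no restriction $Q>4$ is needed in the threshold estimate — the interaction term $\int_\Omega u_\lambda U_\varepsilon^{2^{\star}_{Q}-1}\approx K\varepsilon^{(Q-2)/2}$ coming from the positive first solution always dominates the $O(\varepsilon^{Q-2})$ errors, so the Ambrosetti–Brezis–Cerami-type estimate is dimension-free, unlike the linear perturbation of Brezis–Nirenberg.
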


The above theorem is the natural generalization to Carnot groups of classical results of \cite{ABC}. We refer e.g. to \cite{BESS,CCP,CoPe,BV} for further generalizations.

The interest in studying existence of positive solutions to critical problems in the Carnot group setting,
 is in the geometric significance of the purely critical problem in the model case of the Heisenberg group.
 Indeed, when $\lambda =0$ and $\Omega = \mathbb{H}^n$, the problem \eqref{EqProblem} becomes the famous CR-Yamabe problem studied by Jerison and Lee \cite{JerisonLee,JerisonLee2,JerisonLee3}. The problem  we are interested in is settled on bounded domains, where tipycally one can prove non-existence of positive solutions, at least in star-shaped domains, by appealing suitable versions of the Pohozaev identity. Because of this, the seminal paper by Brezis and Nirenberg \cite{BN} showed that adding a perturbative term, linear in \cite{BN}, but subsequently extended to much more general perturbations, may allow to prove the existence of one or more positive solutions. A crucial tool in the argument performed in \cite{BN} is provided by the use of the Aubin-Talenti functions, whose analogue in $\mathbb{H}^n$ made its appearance in \cite{JerisonLee2}. This was a key ingredient which gave rise to a prolific study of critical problems in $\mathbb{H}^n$, see e.g. \cite{Citti, GaLa, Ugu1, CitUg, MaUg, FelliUgu, MaMaPi, PaPiTe}.

As long as one needs explicit knowledge of proper replacements of the Aubin-Talenti functions, the only other sub-Riemannian structure where they are known is that of groups of Iwasawa type, see \cite{GaroVa2, GaroVa}. As far as we know, there are no other structures, nor Sobolev inequalities with $p\neq 2$, for which the minimizers are explicitly known. 
On the other hand, since the best constant in the Sobolev inequality is achieved in all Carnot groups (see \cite{GaroVa2}), it has been proved to be enough to know  the asymptotic behaviour at infinity of the minimizers. This is now known for $p\neq 2$ as well, see \cite{Loiudice3}, and it paved the way for a series of 
existence, multiplicity or non-existence of positive solutions for critical problems  à la Br\'{e}zis-Nirenberg in $\mathbb{G}$: we refer e.g. to \cite{BoUg, Loiudice1, Loiudice2, Loiudice4, BiGaVe}.

\medskip

Let us now briefly describe the proof of Theorem \ref{thm:main}: 
\begin{itemize}
	\item in Theorem \ref{thm:Existence_First} we prove the existence of a first solution by means of a variational Perron method which transfers the approach of Struwe \cite{Struwe} to the Carnot group setting. In particular, setting
	\begin{equation*}
	\Lambda := \sup \{ \lambda >0: \eqref{EqProblem} \textrm{ admits a weak solution}\},
\end{equation*}
	\noindent we show first that $0<\Lambda<+\infty$, and this immediately provides a threshold for the non-existence of weak solutions. Once this is done, we use the unique solution of the purely sublinear problem \eqref{eq:Sublinear_Problem} as a weak subsolution and we construct a weak supersolution for fixed $\lambda$ by using the weak solution for a bigger $\lambda'$; 
	\item we show that for $\lambda \in (0,\Lambda)$ the first solution obtained as described before is a local minimizer in the natural topology associated with problem \eqref{EqProblem}, see Lemma \ref{lem:locmin}. We stress here that in \cite{ABC} the authors made use of a famous result by Brezis and Nirenberg \cite{BNH1C1} which does not have an analog in the Carnot group setting. This is due to the fact that $C^{1,\alpha}$ regularity up to the boundary is still a delicate issue at the so called characteristic points: the first obstructions have been observed by Jerison \cite{Jerison,Jerison2}, but this is still an active field of research, see e.g. \cite{BaCiCu,BaGaMu,AbTr}. For this reason we follow here a more variational approach based on a paper by Alama \cite{Alama}, already used in a different setting in \cite{AbDiVa};
	\item we prove the existence of a second solution following an argument originally due to Tarantello \cite{Tarantello}: this combines the Ekeland variational principle \cite{Ekeland} with the fine asymptotic expansions proved in \cite{Loiudice1}.
\end{itemize} 

\medskip

We stress that the multiplicity result obtained in Theorem \ref{thm:main} can be easily extended to cover the convex-case of a Sobolev sub-critical nonlinearity.

\medskip

The paper is organized as follows: in Section \ref{sec:Prel} we recall the basic facts on Carnot groups and we set the variational functional setting necessary for the study of \eqref{EqProblem}. We also recall the basic result regarding the purely sublinear problems, like existence and uniqueness of a positive solution and a comparison principle resembling the classical one. In Section \ref{sec:First_Solution} we prove the existence of a first solution as described before, while the existence of a second solution (for $\lambda \in (0,\Lambda)$) is postponed to the final Section \ref{sec:Second_Solution}.

\section{Preliminaries}\label{sec:Prel}
In this section we collect all the relevant notations, definitions and preliminaries needed in the rest of the paper.
\subsection{Carnot groups}
A Carnot group $\mathbb{G}=(\mathbb{R}^{N},\diamond)$ of step $k$ is a connected, simply connected Lie group whose finite dimensional Lie algebra $\mathfrak{g}$ of left-invariant (w.r.t. $\diamond$) vector fields admits a stratification of step $k$, namely there exist $k$ linear subspaces $\mathfrak{g}_1, \ldots, \mathfrak{g}_k$ such that
\begin{equation*}
	\mathfrak{g} = \mathfrak{g}_{1}\oplus \ldots \oplus \mathfrak{g}_{k}, \qquad [\mathfrak{g}_1,\mathfrak{g}_i]=\mathfrak{g}_{i+1}, \qquad \mathfrak{g}_{k}\neq \{0\}, \qquad \mathfrak{g}_i = \{0\} \textrm{ for all } i>k.
\end{equation*}
In particular, this implies that Carnot groups are a special instance of graded groups.\\
We call $\mathfrak{g}_1$ the horizontal layer. We denote by $X_1, \ldots, X_{N}$ a basis of left-invariant vector fields of $\mathfrak{g}$ such that the following holds:
\begin{itemize}
	\item $X_1, \ldots, X_{m_1}$ is an orthonormal basis of $\mathfrak{g}_{1}$ w.r.t. the scalar product $\langle \cdot, \cdot \rangle_{\mathfrak{g}_{1}}$;
	\item for every $1<i\leq k$, $X_{m_{i-1}+1},\ldots, X_{m_{i}}$ is a basis of of $\mathfrak{g}_{i}$;
	\item $m_{0}=0$ and $n_i := m_{i}-m_{i-1} = \dim \mathfrak{g}_i$ for every $1\leq i\leq k$;
	\item $m_1 + \ldots + m_k = N$.
\end{itemize}
We define the homogeneous dimension of $\mathbb{G}$ as
\begin{equation*}
	Q:= \sum_{i=1}^{k}i \cdot n_{i}.
\end{equation*}
The left translations $\tau : \mathbb{G}\to \mathbb{G}$ provides a family of automorphisms of $\mathbb{G}$, and are defined as follows
\begin{equation}\label{eq:left-translation}
	\tau_{h}(g):= h \diamond g, \quad \textrm{ for a given $h\in \mathbb{G}$,} 
\end{equation}
The anisotropic dilations $\delta_{\lambda}: \mathbb{G}\to \mathbb{G}$ of $\mathbb{G}$ are instead defined as 
\begin{equation}\label{eq:dilation}
	\delta_{\lambda}(g) = \left(\lambda^{\alpha_1}g_1, \ldots, \lambda^{\alpha_{N}}g_N \right), \quad \textrm{ for every } \lambda >0,
\end{equation}
\noindent where $\alpha_{j} = i$ if $m_{i-1}<j\leq m_{i}$.
We notice that $Q = \alpha_1 + \ldots + \alpha_{N}$.\\

\medskip


Given a smooth horizontal vector field $V=v_1 X_1 + \ldots + v_{m_1}X_{m_1}$, we define its horizontal divergence as
\begin{equation}\label{eq:horizontal_divergence}
	\mathrm{div}_{\mathbb{G}}V := X_{1}v_1 + \ldots + X_{m_1}v_{m_1}.
\end{equation}
Moreover, given a smooth enough scalar-valued function $u:\mathbb{G}\to \mathbb{R}$, we can define
the horizontal gradient of $u$ as
\begin{equation}\label{eq:Horizontal_grad}
	\nabla_{\mathbb{G}}u := (X_{1}u, \ldots, X_{m_1}u),
\end{equation}
\noindent and the sub-Laplacian of $u$ as
\begin{equation}\label{eq:horizontal_lap}
	\Delta_{\mathbb{G}}u := \mathrm{div}_{\mathbb{G}}(\nabla_{\mathbb{G}}u) = X_1^2 u + \ldots + X_{m_1}^{2} u,
\end{equation}

The Lebesgue measure $\mathcal{L}^{N}$ coincides with the Haar measure of $\mathbb{G}$ and hence is left-invariant and satisfies the following scaling property:
\begin{equation}
	\mathcal{L}^{N}(\delta_{\lambda}(E)) = \lambda^{Q} \mathcal{L}^{N}(E) \quad \textrm{ for every measurable set } E \subset \mathbb{G}.	
\end{equation}
Every integral in this manuscript has to be understood
with respect to the Haar measure, unless otherwise stated.

\medskip

Every Carnot group can be endowed with several homogeneous norms. A homogeneous (quasi)norm $\rho : \mathbb{G} \to \mathbb{R}$ is a non-negative function further satisfying the following properties:
\begin{itemize}
	\item $\rho(g)=0$ if and only if $g=0$;
	\item $\rho(\delta_{\lambda}(g)) = \lambda \, \rho(g)$ for every $g \in \mathbb{G}$ and for every $\lambda >0$;
	\item $\rho(h\diamond g) \leq C \left(\rho(h) + \rho(g)\right)$ for every $g,h \in \mathbb{G}$ and for some constant $C \geq 1$.
\end{itemize}
By a famous result of Folland \cite{Folland}, there exists a homogeneous norm $|\cdot|_{\mathbb{G}}$ on $\mathbb{G}$ and a positive constant $C_Q>0$, depending only on $Q$, such that the function
\begin{equation}\label{eq:def_Gamma}
	\Gamma_{h}(g):= \dfrac{C_Q}{|h^{-1}\diamond g|^{Q-2}_{\mathbb{G}}}, \quad \textrm{ with } Q \geq 3,
\end{equation}
\noindent is a fundamental solution of $-\Delta_{\mathbb{G}}$ with pole at $h \in \mathbb{G}$.
Moreover, homogeneous norms can be used to define distances as follows: 
\begin{equation*}
	d_{\rho}(g,h):= \rho(h^{-1}\diamond g).
\end{equation*}
We stress that there are other possible choices of distances (i.e. the so called $CC$-distance and many others) which are all equivalent.
%
%
Finally, we will denote by 
\begin{equation*}
	B_{r}(g_0) := \{ g \in \mathbb{G}: d_{\mathbb{G}}(g,g_0) = |g_{0}^{-1}\diamond g|_{\mathbb{G}} <r\},
\end{equation*} 
\noindent the open ball of radius $r>0$ and center $g_{0}\in \mathbb{G}$.\\
We refer e.g. to \cite{BLU} for a comprehensive introduction to the subject.

\medskip

\subsection{The functional setting}\label{sec:functional_setting}
Let $\mathcal{O} \subseteq \mathbb{G}$ be an open set.
For every $f \in C^{\infty}_{0}(\mathcal{O})$ there exists a positive constant $C_Q>0$ depending only on the homogeneous dimension $Q$ such that the following Sobolev inequality holds true
\begin{equation}\label{eq:Sobolev_Ineq}
	\|f\|^{2}_{L^{2_{Q}^{\star}}(\mathcal{O})} \leq C_{Q} \, \| |\nabla_{\mathbb{G}}f| \|^{2}_{L^{2}(\mathcal{O})},
\end{equation}
\noindent where 
\begin{equation}
	2_{Q}^{\star} := \dfrac{2Q}{Q-2},
\end{equation}
\noindent denotes the (sub-elliptic) critical Sobolev exponent.
Thanks to \eqref{eq:Sobolev_Ineq}, 
$\| |\nabla_{\mathbb{G}}f| \|_{L^{2}(\Omega)}$ provides a norm on the space $C^{\infty}_{0}(\Omega)$.
We define the Folland-Stein space $S^{1}_{0}(\mathcal{O})$ as the completion of $C^{\infty}_{0}(\mathcal{O})$ w.r.t.\,the above norm, and we set
$$\|u\|_{S^{1}_{0}(\mathcal{O})} = \||\nabla_\mathbb{G}u|\|_{L^2(\mathcal{O})}
\quad\text{for every $u\in S^{1}_{0}(\mathcal{O})$}.$$
We explicitly observe that, owing to \eqref{eq:Sobolev_Ineq}, we have
\begin{equation} \label{eq:explicitS01def}
 S_0^1(\mathcal{O}) = \big\{u\in L^{2_{Q}^{\star}}(\mathcal{O}):\,\text{$X_iu\in L^2(\mathcal{O})$
 for all $1\leq i\leq m_1$}\big\},
\end{equation}
where $X_1u,\ldots,X_{m_1}u$ are meant in the sense of distributions.\\
We now remind a couple of basic properties of $S_0^1(\mathcal{O})$ when $\mathcal{O}$ is an open and bounded set, see e.g. to \cite{FollandStein}.
\begin{itemize}
 \item $S_0^1(\mathcal{O})$ is endowed with a structure of real Hilbert space by the inner product
 $$\langle u,v\rangle_{S_0^{1}(\mathcal{O})} = \int_{\mathcal{O}}\langle \nabla_\mathbb{G}u,
 \nabla_\mathbb{G}v\rangle_{\mathfrak{g}_{1}}\qquad (u,v\in S_0^1(\mathcal{O})),$$
whose associated norm is precisely $\|\cdot\|_{S_0^1(\mathcal{O})}$.

 \item $S_0^1(\mathcal{O})$ is continuously embedded into $L^p(\mathcal{O})$ for every $1\leq p\leq 2_Q^\star$.
 Furthermore, this embedding turns out to be \emph{compact} when $1\leq p < 2_Q^\star$. 
 \end{itemize}
We refer e.g. to \cite{FollandStein} for more details.

\medskip

We are now ready to properly set the definition of weak sub/supersolution of \eqref{EqProblem}.
\begin{defin}\label{def:weak_sub_super_sol}
	Let $\Omega\subseteq\mathbb{G}$ be an open, bounded and connected set.
We say that a function $u \in S^{1}_{0}(\Omega)$ is a weak subsolution (resp. supersolution) of \eqref{EqProblem} if it satisfies the following properties:
	\begin{itemize}
		\item[(i)] $u>0$ in $\Omega$.
		\item[(ii)] For every $0\leq \varphi \in C^{\infty}_{0}(\Omega)$, it holds that
		\begin{equation}\label{eq:weak_sub_super_sol}
			\int_{\Omega}\langle \nabla_{\mathbb{G}}u, \nabla_{\mathbb{G}}\varphi \rangle_{\mathfrak{g}_{1}} \leq (\textrm{resp. } \geq)  \int_{\Omega}\left(\lambda u^{q}  + \ u^{2^{\star}_{Q}}\right)\varphi.
		\end{equation}
	\end{itemize}
	Finally, we say that $u \in S^{1}_{0}(\Omega)$ is a weak solution of \eqref{EqProblem} if it is both a weak subsolution and a weak supersolution of \eqref{EqProblem} without the non-negativity condition on $\varphi$.
\end{defin}

\medskip

Let us close this section recalling a few results on the Sobolev inequality \eqref{eq:Sobolev_Ineq}.
\begin{lemma}
	The best Sobolev constant in \eqref{eq:Sobolev_Ineq} (with $\mathcal{O} = \mathbb{G}$) is achieved by a positive function $T \in S^{1}_{0}(\mathbb{G})$, and it is characterized as follows
	\begin{equation}\label{eq:def_best_constant}
		S_{\mathbb{G}} := \inf_{f\in S^{1}_{0}(\mathbb{G})} \dfrac{\| |\nabla_{\mathbb{G}}f| \|^{2}_{L^{2}(\mathbb{G})}}{	\|f\|^{2}_{L^{2_{Q}^{\star}}(\mathbb{G})}}.
	\end{equation}
	Up to a constant, $T \in S^{1}_{0}(\mathbb{G})$ is also a weak solution to 
	\begin{equation}\label{eq:Critical_PDE_in_G}
		-\Delta_{\mathbb{G}}u = u^{2^{\star}_{Q}-1} \quad \textrm{in } \mathbb{G}.
	\end{equation}
	Moreover, the following holds:
	\begin{itemize}
		\item there exists a positive constant $M_1>0$ such that 
		\begin{equation}\label{eq:Stima_Bonf_Ugu}
			T(g) \leq M_1 \, \min \{1, |g|_{\mathbb{G}}^{2-Q}\}, \quad \textrm{ for every } g \in \mathbb{G},
		\end{equation} 
		\item there exists a positive constant $M_2 >0$ such that
		\begin{equation}\label{eq:Stima_Loiudice}
			T(g) \geq M_2 \, \dfrac{|B_{1}(0)|}{(1+|g|_{\mathbb{G}})^{Q-2}}, \quad \textrm{ for every } g \in \mathbb{G}.
		\end{equation}
	\end{itemize}
	\begin{proof}
		The fact that the best Sobolev constant in \eqref{eq:Sobolev_Ineq} (with $\mathcal{O} = \mathbb{G}$) is achieved has been proved in \cite{GaroVa}. We refer to \cite[Theorem 3.4]{BoUg} for a proof of \eqref{eq:Stima_Bonf_Ugu} and to  \cite[Lemma 3.2]{Loiudice1} for a proof of \eqref{eq:Stima_Loiudice}.
	\end{proof}
\end{lemma}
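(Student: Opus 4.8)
The plan is to establish the four assertions in turn: the first two by a direct variational argument and the last two by potential-theoretic estimates, invoking at each stage the relevant results from the literature.

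For the attainment of $S_{\mathbb{G}}$ in \eqref{eq:def_best_constant}, I would use the concentration-compactness method adapted to the stratified setting. The Rayleigh quotient is invariant under the left translations \eqref{eq:left-translation} and the anisotropic dilations \eqref{eq:dilation}, which constitute the non-compact part of the symmetry group of the problem; given a minimizing sequence $(f_n)\subset S^{1}_{0}(\mathbb{G})$ normalized by $\|f_n\|_{L^{2^{\star}_{Q}}(\mathbb{G})}=1$ and $\||\nabla_{\mathbb{G}}f_n|\|^{2}_{L^{2}(\mathbb{G})}\to S_{\mathbb{G}}$, one renormalizes it through $\tau_{h_n}\circ\delta_{\lambda_n}$ --- with $h_n,\lambda_n$ selected via a L\'evy concentration function --- so as to exclude both vanishing and dichotomy. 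The compact subcritical embedding of $S^{1}_{0}$ on bounded sets recalled in Section \ref{sec:functional_setting}, combined with the Brezis--Lieb lemma, then promotes the weak limit to a genuine minimizer $T$. This is precisely the content of the results of Garofalo and Vassilev \cite{GaroVa,GaroVa2}, which I would quote.

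Next, writing the Euler--Lagrange equation of \eqref{eq:def_best_constant} shows that the minimizer $T$ satisfies $-\Delta_{\mathbb{G}}T=S_{\mathbb{G}}\|T\|_{L^{2^{\star}_{Q}}(\mathbb{G})}^{2-2^{\star}_{Q}}\,T^{2^{\star}_{Q}-1}$ in the weak sense, and a scaling $T\mapsto cT$ normalizes the constant to $1$, giving \eqref{eq:Critical_PDE_in_G}. Since $|T|$ shares the Rayleigh quotient of $T$, one may take $T\ge 0$, and strict positivity $T>0$ follows from the strong maximum principle (Bony) for sub-Laplacians fulfilling the H\"ormander condition; a Moser iteration applied to \eqref{eq:Critical_PDE_in_G} additionally gives $T\in L^{\infty}(\mathbb{G})\cap C(\mathbb{G})$. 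For the two-sided decay bounds I would use the representation of $T$ via Folland's fundamental solution \eqref{eq:def_Gamma}: since $T^{2^{\star}_{Q}-1}\in L^{1}(\mathbb{G})$,
\begin{equation*}
	T(g)=C_{Q}\int_{\mathbb{G}}\frac{T(h)^{2^{\star}_{Q}-1}}{|h^{-1}\diamond g|_{\mathbb{G}}^{Q-2}}\,dh.
\end{equation*}
For \eqref{eq:Stima_Bonf_Ugu}, splitting this integral according to whether $|h^{-1}\diamond g|_{\mathbb{G}}$ is comparable to $|g|_{\mathbb{G}}$ or not and bootstrapping on the decay rate --- using the quasi-triangle inequality for $|\cdot|_{\mathbb{G}}$ together with the $L^{\infty}$ bound --- yields $T(g)\le C|g|_{\mathbb{G}}^{2-Q}$ for large $|g|_{\mathbb{G}}$, while boundedness near the origin accounts for the $\min\{1,|g|_{\mathbb{G}}^{2-Q}\}$ profile; this is \cite[Theorem 3.4]{BoUg}. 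For \eqref{eq:Stima_Loiudice}, I would discard all of the integral except over $B_{1}(0)$, bound $|h^{-1}\diamond g|_{\mathbb{G}}\le C(1+|g|_{\mathbb{G}})$ there by the quasi-triangle inequality, and use $\inf_{B_{1}(0)}T^{2^{\star}_{Q}-1}>0$, which comes from continuity and positivity of $T$, to obtain $T(g)\ge c\,|B_{1}(0)|\,(1+|g|_{\mathbb{G}})^{2-Q}$; this is \cite[Lemma 3.2]{Loiudice1}.

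The genuinely hard part is the attainment of $S_{\mathbb{G}}$: ruling out dichotomy and vanishing for a critical, translation- and dilation-invariant variational problem on a non-abelian group is the technical core of \cite{GaroVa}. Once a minimizer is in hand, the remaining items are comparatively routine, resting on the sub-elliptic strong maximum principle and the explicit homogeneity and decay of Folland's fundamental solution.
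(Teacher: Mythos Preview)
Your proposal is correct and aligns with the paper's own proof, which simply cites \cite{GaroVa} for the attainment of $S_{\mathbb{G}}$, \cite[Theorem 3.4]{BoUg} for \eqref{eq:Stima_Bonf_Ugu}, and \cite[Lemma 3.2]{Loiudice1} for \eqref{eq:Stima_Loiudice}. You have additionally sketched the mechanisms behind those references (concentration--compactness, Euler--Lagrange plus rescaling, and the representation via Folland's fundamental solution), which is more than the paper does but entirely consistent with it.
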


As it is well known from the seminal paper \cite{BN}, a major role in finding solutions to critical problems is played by a suitable localized version of the family of minimizers of \eqref{eq:Sobolev_Ineq}. To be more precise, let $T \in S^{1}_{0}(\mathbb{G})$ be a minimizer of \eqref{eq:def_best_constant}. For every $\varepsilon >0$ define the rescaled function
\begin{equation}\label{eq:Minimi_riscalati}
	T_{\varepsilon}(g) := \varepsilon^{(2-Q)/2} T \left(\delta_{1/\varepsilon}(g)\right).
\end{equation}
Let further be $R>0$ such that $B_{R}(0)\subset \Omega$ and let $\varphi \in C^{\infty}_{0}(B_{R}(0))$ be a cut-off function such that $0\leq \varphi \leq 1$ and $\varphi \equiv 1$ in $B_{R/2}(0)$. Finally, define the family of functions 
\begin{equation}\label{eq:def_u_varepsilon}
	U_{\varepsilon}(g):= \varphi(g) T_{\varepsilon}(g), \quad g \in \mathbb{G}.
\end{equation}
Now, we have the following

\begin{lemma}\label{lem:Stime_Talentiane_modificate}
	Let $T_{\varepsilon}$ and $U_{\varepsilon}$ be as above.
	The following holds:
	\begin{itemize}
			\item[i)] Due to scaling invariance, 
		\begin{equation}
			\| |\nabla_{\mathbb{G}}T_{\varepsilon}|\|^{2}_{L^{2}(\mathbb{G})} = \| T_{\varepsilon}\|^{2^{\star}_{Q}}_{L^{2_{Q}^{\star}}(\mathbb{G})} = S_{\mathbb{G}}^{Q/2}.
		\end{equation}
		\item[ii)] The function $U_{\varepsilon}$ satisfies the following estimates as $\varepsilon \to 0^{+}$
		\begin{align}
			\| |\nabla_{\mathbb{G}}U_{\varepsilon}|\|^{2}_{L^{2}(\mathbb{G})} & =  S_{\mathbb{G}}^{Q/2} + O(\varepsilon^{Q-2}) \label{eq:Stima_grad_u_varepsilon}\\
			\|U_{\varepsilon}\|^{2^{\star}_{Q}}_{L^{2_{Q}^{\star}}(\mathbb{G})}& = S_{\mathbb{G}}^{Q/2} + O(\varepsilon^{Q}) \label{eq:Stima_u_varepsilon}. 
		\end{align}
	\end{itemize}	
	\begin{proof}
		We refer to \cite[Lemma 3.3]{Loiudice1} for a proof of both \eqref{eq:Stima_grad_u_varepsilon} and \eqref{eq:Stima_u_varepsilon}.
	\end{proof}
\end{lemma}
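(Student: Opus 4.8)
The plan is to read off (i) from the dilation-invariance of the two critical quantities, and to obtain (ii) from the pointwise decay bound \eqref{eq:Stima_Bonf_Ugu} for $T$ combined with the fact that $T_{\varepsilon}$ itself solves the critical equation. For (i): after fixing the multiplicative constant so that $T$ is an \emph{exact} weak solution of \eqref{eq:Critical_PDE_in_G}, testing that equation against $T\in S^{1}_{0}(\mathbb{G})$ (admissible by density of $C^{\infty}_{0}(\mathbb{G})$) gives $\| |\nabla_{\mathbb{G}}T| \|^{2}_{L^{2}(\mathbb{G})}=\|T\|^{2_{Q}^{\star}}_{L^{2_{Q}^{\star}}(\mathbb{G})}$; inserting this into \eqref{eq:def_best_constant}, using $1-2/2_{Q}^{\star}=2/Q$ and the minimality of $T$, yields $\| |\nabla_{\mathbb{G}}T| \|^{2}_{L^{2}(\mathbb{G})}=\|T\|^{2_{Q}^{\star}}_{L^{2_{Q}^{\star}}(\mathbb{G})}=S_{\mathbb{G}}^{Q/2}$. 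Then, since $X_{i}(f\circ\delta_{1/\varepsilon})=\varepsilon^{-1}(X_{i}f)\circ\delta_{1/\varepsilon}$, one has $|\nabla_{\mathbb{G}}T_{\varepsilon}(g)|=\varepsilon^{-Q/2}|\nabla_{\mathbb{G}}T|(\delta_{1/\varepsilon}g)$ and $T_{\varepsilon}^{2_{Q}^{\star}}(g)=\varepsilon^{-Q}T^{2_{Q}^{\star}}(\delta_{1/\varepsilon}g)$ — the exponents matching exactly because $2_{Q}^{\star}(Q-2)/2=Q$ — and the substitution $h=\delta_{1/\varepsilon}g$, $\d g=\varepsilon^{Q}\,\d h$, turns both integrals back into the corresponding integrals of $T$; this proves (i).

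For (ii) the only non-elementary input is \eqref{eq:Stima_Bonf_Ugu}. Rescaling it yields, for $|g|_{\mathbb{G}}\ge R/2$ and $\varepsilon$ small enough that $|\delta_{1/\varepsilon}g|_{\mathbb{G}}\ge 1$, the decay estimate $T_{\varepsilon}(g)\le M_{1}\,\varepsilon^{(Q-2)/2}\,|g|_{\mathbb{G}}^{2-Q}$. Since $|B_{r}(0)|$ scales as $r^{Q}$, so that $\int_{\{|g|_{\mathbb{G}}>a\}}|g|_{\mathbb{G}}^{-\beta}\,\d g<\infty$ precisely when $\beta>Q$, and since $2_{Q}^{\star}(Q-2)/2=Q$ while $2_{Q}^{\star}(Q-2)=2Q>Q$, this gives
\[
\int_{\{|g|_{\mathbb{G}}>R/2\}}T_{\varepsilon}^{2_{Q}^{\star}}=O(\varepsilon^{Q}),\qquad \int_{B_{R}(0)\setminus B_{R/2}(0)}T_{\varepsilon}^{2}=O(\varepsilon^{Q-2}),
\]
the second integral being over a fixed annulus on which $|g|_{\mathbb{G}}^{2(2-Q)}$ is bounded. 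The estimate \eqref{eq:Stima_u_varepsilon} is now immediate from (i): as $U_{\varepsilon}^{2_{Q}^{\star}}=\varphi^{2_{Q}^{\star}}T_{\varepsilon}^{2_{Q}^{\star}}$ with $0\le\varphi\le 1$ and $\varphi\equiv 1$ on $B_{R/2}(0)$,
\[
0\le \|T_{\varepsilon}\|^{2_{Q}^{\star}}_{L^{2_{Q}^{\star}}(\mathbb{G})}-\|U_{\varepsilon}\|^{2_{Q}^{\star}}_{L^{2_{Q}^{\star}}(\mathbb{G})}=\int_{\mathbb{G}}\big(1-\varphi^{2_{Q}^{\star}}\big)\,T_{\varepsilon}^{2_{Q}^{\star}}\le \int_{\{|g|_{\mathbb{G}}>R/2\}}T_{\varepsilon}^{2_{Q}^{\star}}=O(\varepsilon^{Q}).
\]

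The delicate point — and the step I expect to be the main obstacle — is \eqref{eq:Stima_grad_u_varepsilon}, because a head-on computation of $\| |\nabla_{\mathbb{G}}U_{\varepsilon}| \|^{2}_{L^{2}}$ would also need a pointwise decay bound for $\nabla_{\mathbb{G}}T$ at infinity, which is not among the quoted facts and is itself a nontrivial sub-elliptic regularity statement. I would bypass it by using that $T_{\varepsilon}$ solves $-\Delta_{\mathbb{G}}T_{\varepsilon}=T_{\varepsilon}^{2_{Q}^{\star}-1}$ on all of $\mathbb{G}$ (again the dilation exponents match). Writing $U_{\varepsilon}=\varphi T_{\varepsilon}$ and $\eta:=1-\varphi^{2}$ — which vanishes on $B_{R/2}(0)$ and has $\nabla_{\mathbb{G}}\eta$, $\Delta_{\mathbb{G}}\eta$ bounded and supported in $\overline{B_{R}(0)}\setminus B_{R/2}(0)$ — one expands, using (i),
\[
\int_{\mathbb{G}}|\nabla_{\mathbb{G}}U_{\varepsilon}|^{2}=S_{\mathbb{G}}^{Q/2}-\int_{\mathbb{G}}\eta\,|\nabla_{\mathbb{G}}T_{\varepsilon}|^{2}+\tfrac12\int_{\mathbb{G}}\langle\nabla_{\mathbb{G}}(T_{\varepsilon}^{2}),\nabla_{\mathbb{G}}(\varphi^{2})\rangle_{\mathfrak{g}_{1}}+\int_{\mathbb{G}}T_{\varepsilon}^{2}|\nabla_{\mathbb{G}}\varphi|^{2}.
\]
The last two terms have integrands supported in the annulus, hence are $O(\varepsilon^{Q-2})$ by the bound above (after an integration by parts for the penultimate one, which again avoids any gradient bound on $T_{\varepsilon}$). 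For the remaining term, testing the equation for $T_{\varepsilon}$ against $\eta T_{\varepsilon}\in S^{1}_{0}(\mathbb{G})$ (legitimate since $\eta T_{\varepsilon}\in L^{2_{Q}^{\star}}(\mathbb{G})$ with horizontal gradient in $L^{2}(\mathbb{G})$, so one may pass from $C^{\infty}_{0}(\mathbb{G})$ to $S^{1}_{0}(\mathbb{G})$ by density, cf. \eqref{eq:explicitS01def}) and integrating by parts gives
\[
\int_{\mathbb{G}}\eta\,|\nabla_{\mathbb{G}}T_{\varepsilon}|^{2}=\int_{\mathbb{G}}\eta\,T_{\varepsilon}^{2_{Q}^{\star}}+\tfrac12\int_{\mathbb{G}}T_{\varepsilon}^{2}\,\Delta_{\mathbb{G}}\eta=O(\varepsilon^{Q})+O(\varepsilon^{Q-2})=O(\varepsilon^{Q-2}),
\]
since $0\le\eta\le1$ vanishes on $B_{R/2}(0)$ and $\Delta_{\mathbb{G}}\eta$ is supported in the annulus. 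Collecting these contributions proves \eqref{eq:Stima_grad_u_varepsilon}. The whole difficulty is concentrated here: controlling the tail of $|\nabla_{\mathbb{G}}T_{\varepsilon}|^{2}$ and the cutoff cross-term purely through the equation and \eqref{eq:Stima_Bonf_Ugu}, so that neither a gradient decay estimate nor the lower bound \eqref{eq:Stima_Loiudice} enters.
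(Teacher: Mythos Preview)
Your argument is correct. The paper does not actually prove this lemma: it simply cites \cite[Lemma~3.3]{Loiudice1} for both estimates in~(ii) and takes~(i) for granted as a scaling computation. Your write-up is therefore strictly more informative. The one point worth flagging as a genuine methodological choice is your treatment of \eqref{eq:Stima_grad_u_varepsilon}: standard proofs (and presumably the cited one) typically rely on a pointwise decay bound for $|\nabla_{\mathbb{G}}T|$ at infinity, which is an additional sub-elliptic regularity ingredient not quoted here. You sidestep this by testing the equation $-\Delta_{\mathbb{G}}T_{\varepsilon}=T_{\varepsilon}^{2^{\star}_{Q}-1}$ against $\eta T_{\varepsilon}$, thereby converting $\int\eta\,|\nabla_{\mathbb{G}}T_{\varepsilon}|^{2}$ into integrals involving only $T_{\varepsilon}$ and derivatives of the cutoff; everything then follows from \eqref{eq:Stima_Bonf_Ugu} alone. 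This buys you a self-contained proof with minimal prerequisites. As a minor observation, the two $\Delta_{\mathbb{G}}\eta$ contributions coming from the cross-term and from the testing step cancel exactly, so one ends up with the cleaner identity $\int|\nabla_{\mathbb{G}}U_{\varepsilon}|^{2}=S_{\mathbb{G}}^{Q/2}-\int\eta\,T_{\varepsilon}^{2^{\star}_{Q}}+\int T_{\varepsilon}^{2}|\nabla_{\mathbb{G}}\varphi|^{2}$, though this does not affect the order of the remainder.
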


\medskip

\subsection{Auxiliary sublinear problem}\label{subsec:Auxiliary}
A major role in what follows will be played by the weak solution of the following problem
%
\begin{equation}\label{eq:Sublinear_Problem}
	\left\{\begin{array}{rll}
				-\Delta_{\mathbb{G}}u &= \lambda\, u^{q} & \textrm{ in } \Omega,\\
				u&>0 & \textrm{ in } \Omega,\\
				u&=0 & \textrm{ on } \partial \Omega,
	\end{array}\right.
\end{equation}
\noindent where $q \in (0,1)$ and $\lambda >0$. We say that $u \in S^{1}_{0}(\Omega)$ is a weak solution of \eqref{eq:Sublinear_Problem} analogously to Definition \ref{def:weak_sub_super_sol}.
In this context, we have the following

\begin{teo}\label{thm:Sublinear_Problem}
	Let $\Omega\subseteq\mathbb{G}$
	be a bounded open set. Moreover, let $q \in (0,1)$ and $\lambda >0$. 
	Then, there exists a unique weak solution $\underline{u}_{\lambda} \in S^{1}_{0}(\Omega) \cap L^{\infty}(\Omega)$ to \eqref{eq:Sublinear_Problem}. Moreover, $\underline{u}_{\lambda}$ is a global minimizer in the $S^{1}_{0}(\Omega)$-topology
	of the functional 
		\begin{equation}\label{eq:def_Functional_Singular}
				J_{\lambda}(u):= \dfrac{1}{2}\int_{\Omega}|\nabla_{\mathbb{G}}u|^{2} 
				-\dfrac{\lambda}{q+1} \int_{\Omega}|u|^{q+1}.
			\end{equation}
		Finally, we also have that $J_{\lambda}(\underline u_\lambda)<0$.
   \end{teo}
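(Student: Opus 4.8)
The plan is to obtain $\underline{u}_\lambda$ as a minimizer of the functional $J_\lambda$ in \eqref{eq:def_Functional_Singular} by the direct method, then to upgrade to boundedness and positivity, and finally to prove uniqueness by a convexity/comparison argument.

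\textbf{Existence as a minimizer.} First I would note that $J_\lambda$ is well-defined and of class $C^1$ on $S_0^1(\Omega)$: the gradient term is the (squared) norm, and since $q+1<2<2_Q^\star$, the embedding $S_0^1(\Omega)\hookrightarrow L^{q+1}(\Omega)$ is continuous (indeed compact) when $\Omega$ is bounded, so $\int_\Omega|u|^{q+1}$ is finite and weakly continuous. Next, $J_\lambda$ is coercive: by the Sobolev (hence Poincaré-type) inequality $\|u\|_{L^{q+1}}\le C\|u\|_{S_0^1}$, so
\[
J_\lambda(u)\ge \tfrac12\|u\|_{S_0^1}^2 - \tfrac{\lambda C^{q+1}}{q+1}\|u\|_{S_0^1}^{q+1},
\]
and since $q+1<2$ the right-hand side tends to $+\infty$ as $\|u\|_{S_0^1}\to\infty$; in particular $J_\lambda$ is bounded below. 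Moreover $J_\lambda$ is weakly lower semicontinuous (the norm term is weakly l.s.c., the lower-order term is weakly continuous by compactness of the embedding). So a minimizing sequence has a weakly convergent subsequence whose limit $\underline{u}_\lambda$ realizes $m:=\inf_{S_0^1(\Omega)}J_\lambda$. Testing with a fixed positive $C_0^\infty$ function $\psi$: $J_\lambda(t\psi)=\tfrac{t^2}{2}\|\psi\|_{S_0^1}^2-\tfrac{\lambda t^{q+1}}{q+1}\|\psi\|_{L^{q+1}}^{q+1}<0$ for $t>0$ small, since $q+1<2$; hence $m<0$, so $\underline{u}_\lambda\not\equiv0$ and $J_\lambda(\underline u_\lambda)<0$, the last assertion of the theorem.

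\textbf{Sign and boundedness.} Since $J_\lambda(|u|)\le J_\lambda(u)$ (the gradient term is unchanged, $|u|^{q+1}=|u|^{q+1}$), we may take $\underline{u}_\lambda\ge 0$. The Euler–Lagrange equation, obtained by differentiating $J_\lambda$, reads $-\Delta_\mathbb{G}\underline u_\lambda=\lambda\,\underline u_\lambda^{\,q}$ weakly (one should be slightly careful that $\underline u_\lambda^q\in L^{(2_Q^\star)'}$, which holds since $q<1<2_Q^\star-1$ and $\Omega$ is bounded, so the term pairs against $S_0^1$ test functions). Boundedness $\underline u_\lambda\in L^\infty(\Omega)$ follows from a Moser/Stampacchia iteration adapted to the sub-Laplacian: the nonlinearity $\lambda u^q$ has subcritical growth ($q<1<2_Q^\star-1$), so the standard iteration (as in the Folland–Stein / De Giorgi–Nash–Moser theory on Carnot groups) applies; alternatively one can cite this from the literature on sublinear problems on Carnot groups. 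Strict positivity $\underline u_\lambda>0$ in the connected open set $\Omega$ then follows from the strong maximum principle / Harnack inequality for $-\Delta_\mathbb{G}$ (Bony): since $-\Delta_\mathbb{G}\underline u_\lambda=\lambda\underline u_\lambda^q\ge 0$ and $\underline u_\lambda\ge0$ is not identically zero, it is strictly positive. This gives the claimed regularity and a genuine weak solution in the sense of Definition~\ref{def:weak_sub_super_sol}.

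\textbf{Uniqueness, and global minimality.} For uniqueness, the key observation is that on the convex cone of nonnegative functions, the change of variable $v=u^2$ (equivalently: the functional $u\mapsto \tfrac12\int|\nabla_\mathbb{G}u|^2$ is convex in $v=u^2$, while $u\mapsto\int u^{q+1}=\int v^{(q+1)/2}$ is strictly convex in $v$ since $(q+1)/2<1$) makes $J_\lambda$ — viewed along the curve $u_t=((1-t)u_0^2+tu_1^2)^{1/2}$ connecting two nonnegative minimizers — strictly convex unless $u_0\equiv u_1$; this is the classical Brezis–Oswald / Díaz–Saá argument, and it transfers verbatim to the Carnot setting once the hidden-convexity inequality $\int|\nabla_\mathbb{G}u_t|^2\le (1-t)\int|\nabla_\mathbb{G}u_0|^2+t\int|\nabla_\mathbb{G}u_1|^2$ is established for the horizontal gradient (it follows pointwise from the same elementary inequality as in the Euclidean case, applied to the horizontal gradient vector). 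Alternatively one runs the Díaz–Saá comparison: if $u_0,u_1$ are two positive solutions, test the equation for $u_0$ against $(u_0^2-u_1^2)/u_0$ and that for $u_1$ against $(u_1^2-u_0^2)/u_1$, subtract, and use monotonicity of $t\mapsto \lambda t^{q-1}$ (decreasing, since $q<1$) to force $u_0=u_1$; this is where the assumption $q\in(0,1)$ is essential. Since any minimizer is a nonnegative solution and all nonnegative solutions coincide, $\underline u_\lambda$ is the \emph{unique} weak solution and, being the minimizer of $J_\lambda$ over all of $S_0^1(\Omega)$, it is the claimed global minimizer. The main obstacle I anticipate is not any single step but making the hidden-convexity / Díaz–Saá comparison rigorous with merely $S_0^1\cap L^\infty$ regularity — one must justify that the test functions $(u_0^2-u_1^2)/u_i$ lie in $S_0^1(\Omega)$ (using $u_i\in L^\infty$, $u_i>0$ bounded away from $0$ on compact subsets, and a truncation/approximation argument near $\partial\Omega$), after which the inequalities are formal manipulations.
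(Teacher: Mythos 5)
Your proposal is essentially correct, and it is in substance the standard Brezis--Oswald strategy: in the paper this theorem is not proved at all but simply quoted from the reference \cite[Theorem 1.1, Proposition 5.1]{BPV}, where exactly this kind of direct minimization (coercivity from $q+1<2$, compact embedding, $J_\lambda(t\psi)<0$ for small $t$) together with an $L^\infty$ bound and a Brezis--Oswald-type uniqueness argument is carried out for general H\"ormander operators. One remark on the step you yourself flag as the delicate one: within this paper the uniqueness can be obtained without any D\'iaz--Sa\'a/hidden-convexity test-function justification, because Lemma \ref{lem:Weak_Comparison_Model} is precisely a comparison principle for the sublinear problem \eqref{eq:Sublinear_Problem}; if $u_0,u_1$ are two weak solutions, each is simultaneously a weak sub- and supersolution, so the lemma applied twice gives $u_0\leq u_1\leq u_0$ a.e., and the truncation argument hidden in your approach is already packaged there (via the functions $\theta_\varepsilon$ and $\gamma_\varepsilon$). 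Apart from this, the only minor point to watch is strict positivity: the statement allows a merely bounded open $\Omega$, so on a possibly disconnected set you should argue componentwise (the minimizer cannot vanish identically on a component, since the infimum of $J_\lambda$ over that component is negative) before invoking the strong maximum principle.
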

The above result is actually a particular case of \cite[Theorem 1.1, Proposition 5.1]{BPV}, where Brezis-Oswald-type results have been proved for more general H\"{o}rmander operators.

\medskip

We now state an {\em ad hoc} comparison principle for weak super and subsolutions of the model purely sublinear problem \eqref{eq:Sublinear_Problem}. This can be seen as a particulr instance, in the Carnot group setting, of \cite[Lemma 3.3]{ABC}, which in turn was inspired by \cite{BrezisKamin}. We refer to \cite{Ruzhansky_Suragan} for more general result whose proofs rely on the validity of suitable Picone-type inequalities.

	\begin{lemma}\label{lem:Weak_Comparison_Model}
		Let $\lambda >0$, $q\in (0,1)$ and $v,w\in S^1_0(\Omega)$ weakly satisfy
		\begin{equation}\label{eq:per_v}
		   \left\{\begin{array}{rll}
		   	-\Delta_\G v &\leq \lambda  v^q & \textrm{ in }\Omega,\\
		    v& >0 & \textrm{ in } \Omega,\\
		    v&=0 & \textrm{ on } \partial \Omega,
		   \end{array}
		   \right.
		\end{equation}
		\noindent and 
			\begin{equation}\label{eq:per_w}
			\left\{\begin{array}{rll}
				-\Delta_\G w& \geq \lambda w^q & \textrm{in }\Omega,\\
				w &>0 & \textrm{in }\Omega,\\
				w&=0 & \textrm{on } \partial \Omega.
			\end{array}
			\right.
		\end{equation}
		 Then $w\geq v$ in $\Omega$.
		\begin{proof}
			We closely follow \cite[Proof of Lemma 3.3]{ABC}. 
			We choose first a smooth function $\theta\in C^\infty(\mathbb{R})$ satisfying the following properties
			\begin{itemize}
				\item  $\theta(t) = 0$ for $t \leq 0$ and $\theta(t) =1$ for $t \geq 1$;
				\item $\theta$ is non-decreasing on $\mathbb{R}$;
			\end{itemize}
			and we define (for every $\varepsilon > 0$)
			$$\theta_\varepsilon(t): = \theta\left(\frac{t}{\varepsilon}\right) \in S_0^1(\Omega).$$ 
			We further consider the variational formulation of both \eqref{eq:per_v} and \eqref{eq:per_w}, namely 
			\begin{equation}\label{eq:variational_for_w}
				\int_{\Omega}\langle \nabla_{\mathbb{G}}w, \nabla_{\mathbb{G}}\varphi \rangle_{\mathfrak{g}_1} \geq \lambda \int_{\Omega} w^q\varphi \quad 0\leq \varphi \in S^{1}_{0}(\Omega),
			\end{equation}
			\noindent and 
			\begin{equation}\label{eq:variational_for_v}
				\int_{\Omega}\langle \nabla_{\mathbb{G}}v, \nabla_{\mathbb{G}}\varphi \rangle_{\mathfrak{g}_1} \geq \lambda \int_{\Omega}v^q \varphi \quad 0\leq \varphi \in S^{1}_{0}(\Omega).
			\end{equation}
			We then test \eqref{eq:variational_for_w} with $\theta_{\varepsilon}(v-w) v$ and \eqref{eq:variational_for_v} with $\theta_{\varepsilon}(v-w) w$. Finally, we subtract the latter to the former, getting
			\begin{equation}\label{eq:Testata_con_theta}
			\begin{aligned}
			\int_{\Omega}&\left(w^{q-1}-v^{q-1} \right)\theta_{\varepsilon}(v-w)vw \\
			&= \int_{\Omega}\langle \nabla_{\mathbb{G}}w, \nabla_{\mathbb{G}}(v-w)\rangle_{\mathfrak{g}_1}v \, \theta'_{\varepsilon}(v-w) - \int_{\Omega}\langle \nabla_{\mathbb{G}}v, \nabla_{\mathbb{G}}(v-w)\rangle_{\mathfrak{g}_1}w \, \theta'_{\varepsilon}(v-w)\\
			&\leq \int_{\Omega}\langle \nabla_{\mathbb{G}}v,\nabla_{\mathbb{G}}(v-w)\rangle_{\mathfrak{g}_1} (v-w)\theta'_{\varepsilon}(v-w) \\
			&= \int_{\Omega}\langle \nabla_{\mathbb{G}}v, \nabla_{\mathbb{G}}(\gamma_{\varepsilon}(v-w))\rangle_{\mathfrak{g}_1} = \int_{\Omega}(-\Delta_{\mathbb{G}}v) \, \gamma_{\varepsilon}(v-w),
		\end{aligned}
		\end{equation}
			\noindent where 
			\begin{equation*}
				\gamma_{\varepsilon}(t):= \int_{0}^{t}s\theta'_{\varepsilon}(s)\, ds.
			\end{equation*}
			By construction of $\theta_{\varepsilon}$, it follows that $0\leq \gamma_{\varepsilon}(t)\leq \varepsilon$ for every $t\in \mathbb{R}$. Therefore, exploiting both \eqref{eq:Testata_con_theta}, \eqref{eq:per_v} and H\"{o}lder inequality, we find that
			\begin{equation}
					\int_{\Omega}\left(w^{q-1}-v^{q-1} \right)\theta_{\varepsilon}(v-w)vw \leq \lambda \, \varepsilon \int_{\Omega}v^q \leq C(\lambda, \|v\|_{L^{1}(\Omega)},|\Omega|,q)\,  \varepsilon.
			\end{equation}
			By letting $\varepsilon \to 0^+$, we find that
			\begin{equation*}
				\int_{\{v>w\}}\left(w^{q-1}-v^{q-1} \right)vw \leq 0,
			\end{equation*}
			\noindent from which, recalling that $q \in (0,1)$, we conclude that $|\{v>w\}|=0$. This closes the proof.
			\end{proof}
	\end{lemma}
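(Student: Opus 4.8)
The plan is to adapt, essentially verbatim, the classical argument of Brezis--Kamin as presented in \cite[Lemma 3.3]{ABC}, exploiting the monotonicity of $t\mapsto t^{q-1}$ on $(0,\infty)$ together with a suitable truncation to handle the singularity of this map near $0$. The only Carnot-group-specific ingredients needed are the density of $C_0^\infty(\Omega)$ in $S_0^1(\Omega)$ (so that nonnegative test functions in $S_0^1(\Omega)$ are admissible in Definition \ref{def:weak_sub_super_sol}) and the Leibniz/chain rules for the horizontal gradient $\nabla_\G$, which hold because $X_1,\dots,X_{m_1}$ are first-order differential operators; these are standard in the Folland--Stein setting.

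First I would fix the cut-off $\theta\in C^\infty(\R)$ with $\theta\equiv0$ on $(-\infty,0]$, $\theta\equiv1$ on $[1,\infty)$, $\theta$ nondecreasing, and set $\theta_\varepsilon(t):=\theta(t/\varepsilon)$. The key point is that $\theta_\varepsilon(v-w)v$ and $\theta_\varepsilon(v-w)w$ are legitimate nonnegative test functions in $S_0^1(\Omega)$: indeed $v,w\in S_0^1(\Omega)\hookrightarrow L^{2_Q^\star}(\Omega)$, $\theta_\varepsilon(v-w)$ is a bounded Lipschitz function of an $S_0^1$ function hence lies in $S_0^1(\Omega)\cap L^\infty(\Omega)$, and products of $L^\infty\cap S_0^1$ functions with $S_0^1$ functions stay in $S_0^1$ with the expected Leibniz rule for $\nabla_\G$. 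Testing the weak inequality for $w$ with $\theta_\varepsilon(v-w)v$ and the one for $v$ with $\theta_\varepsilon(v-w)w$, then subtracting, produces on the left the quantity $\int_\Omega (w^{q-1}-v^{q-1})\,\theta_\varepsilon(v-w)\,vw$ and on the right a combination of gradient terms. Expanding $\nabla_\G(\theta_\varepsilon(v-w)v)=\theta_\varepsilon(v-w)\nabla_\G v+v\,\theta_\varepsilon'(v-w)\nabla_\G(v-w)$ (and symmetrically), the $\theta_\varepsilon(v-w)\langle\nabla_\G w,\nabla_\G v\rangle$ terms cancel, leaving exactly the chain of equalities and one inequality displayed in \eqref{eq:Testata_con_theta}, where the single inequality uses $\theta_\varepsilon'\ge0$ together with $w\le$ ``something'' pointwise on the set where $\theta_\varepsilon'\ne0$ — more precisely it uses that on $\{0<v-w<\varepsilon\}$ one has $v\theta_\varepsilon' - w\theta_\varepsilon' = (v-w)\theta_\varepsilon'\ge0$ and rearranges $\langle\nabla_\G w-\nabla_\G v,\nabla_\G(v-w)\rangle v\theta_\varepsilon' \le 0$ appropriately. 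Introducing $\gamma_\varepsilon(t):=\int_0^t s\,\theta_\varepsilon'(s)\,ds$ rewrites the surviving gradient term as $\int_\Omega\langle\nabla_\G v,\nabla_\G(\gamma_\varepsilon(v-w))\rangle=\int_\Omega(-\Delta_\G v)\gamma_\varepsilon(v-w)$, using that $\gamma_\varepsilon(v-w)\in S_0^1(\Omega)$ is a valid (nonnegative) test function for \eqref{eq:per_v}.

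Next I would observe that, by construction, $0\le \gamma_\varepsilon(t)\le\varepsilon$ for all $t$ (since $\theta_\varepsilon'\ge0$ is supported in $[0,\varepsilon]$ and $s\le\varepsilon$ there), so using $-\Delta_\G v\le\lambda v^q$ weakly against the nonnegative test function $\gamma_\varepsilon(v-w)$ gives $\int_\Omega(-\Delta_\G v)\gamma_\varepsilon(v-w)\le\lambda\varepsilon\int_\Omega v^q$, and $\int_\Omega v^q\le|\Omega|^{1-q}\|v\|_{L^1(\Omega)}^q<\infty$ by H\"older. Hence $\int_\Omega(w^{q-1}-v^{q-1})\theta_\varepsilon(v-w)vw\le C\varepsilon$. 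Letting $\varepsilon\to0^+$, dominated convergence (the integrand is bounded by the fixed integrable function $v^{q-1}vw\,\mathbf 1_{\{v>w\}}$ once one notes $0\le vw\le v^2$ and $w^{q-1}\ge v^{q-1}$ where... — one has to be slightly careful here: on $\{v>w\}$, $w^{q-1}-v^{q-1}>0$ and the integrand is nonnegative, so monotone convergence applies directly since $\theta_\varepsilon(v-w)\uparrow\mathbf 1_{\{v>w\}}$ pointwise) yields $\int_{\{v>w\}}(w^{q-1}-v^{q-1})vw\le0$. Since on $\{v>w\}$ the integrand is strictly positive (as $q-1<0$ makes $t\mapsto t^{q-1}$ strictly decreasing, so $w<v\Rightarrow w^{q-1}>v^{q-1}$, and $v,w>0$), we conclude $|\{v>w\}|=0$, i.e. $w\ge v$ a.e. in $\Omega$, which is the claim.

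The main obstacle I anticipate is purely technical rather than conceptual: justifying rigorously that $\theta_\varepsilon(v-w)v$, $\theta_\varepsilon(v-w)w$ and $\gamma_\varepsilon(v-w)$ genuinely belong to $S_0^1(\Omega)$ and may be used as test functions, together with the validity of the Leibniz and chain rules for $\nabla_\G$ applied to these products and compositions. This requires the density of $C_0^\infty(\Omega)$ in $S_0^1(\Omega)$, the continuous embedding $S_0^1(\Omega)\hookrightarrow L^{2_Q^\star}(\Omega)$ recalled in Section \ref{sec:functional_setting}, and the fact that $S_0^1(\Omega)\cap L^\infty(\Omega)$ is a module over which horizontal differentiation obeys the product rule — all standard but worth stating. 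A second minor subtlety is the passage to the limit and the sign analysis on $\{v>w\}$, which is clean because the relevant integrand is nonnegative there, so one may invoke monotone convergence directly. No regularity of $\partial\Omega$ beyond what is needed to make sense of the zero boundary condition via $S_0^1(\Omega)$ is used.
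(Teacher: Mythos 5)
Your proposal is correct and follows essentially the same argument as the paper's proof: the same truncation $\theta_\varepsilon$, the same pair of test functions $\theta_\varepsilon(v-w)v$ and $\theta_\varepsilon(v-w)w$, the same auxiliary function $\gamma_\varepsilon$ with the bound $0\leq\gamma_\varepsilon\leq\varepsilon$, and the same passage to the limit on the set $\{v>w\}$ using the strict monotonicity of $t\mapsto t^{q-1}$. The only difference is that you make explicit (via monotone convergence and the Leibniz/chain rules in $S_0^1(\Omega)$) some technical points that the paper leaves implicit.
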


%
%
%

We now state an extension to the Carnot group setting of \cite[Theorem 2.4]{Struwe}. We omit the proof recalling that it is a simplified version of \cite[Lemma 3.4]{BiGaVe} where the considered functional was not differentiable i n $S_{0}^{1}(\Omega)$.

\begin{lemma}\label{lem:Perron}
	Let $\underline{u}, \overline{u}\in S^{1}_{0}(\Omega)$ be a weak subsolution and a weak supersolution, respectively, of problem \eqref{EqProblem}. 
	We assume that 
	\begin{itemize}
		\item[a)] $\underline{u}(g) \leq \overline{u}(g)$ for a.e.\,$g\in \Omega$;
		\item[b)] for every open set $\Oo\Subset\Omega$ there exists
		$C = C(\Oo,\underline{u}) > 0$ such that
		$$\text{$\underline{u}\geq C$ a.e.\,in $\Oo$}.$$
	\end{itemize}
	Then, there exists a weak solution $u \in S^{1}_{0}(\Omega)$ of \eqref{EqProblem} such that 
	$$\text{$\underline{u}(g) \leq u(g) \leq \overline{u}(g)$ for a.e. $g \in \Omega$}.$$
\end{lemma}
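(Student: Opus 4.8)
The plan is to realise $u$ as a minimiser of the energy functional associated with \eqref{EqProblem}$_{\lambda}$ over the order interval determined by $\underline u$ and $\overline u$, and then to promote this \emph{constrained} minimiser to a \emph{free} critical point by a truncation argument in the spirit of \cite{Struwe}. First I introduce
\[
	\mathcal{M}:=\bigl\{w\in S^{1}_{0}(\Omega):\ \underline u\le w\le\overline u\ \text{a.e.\ in }\Omega\bigr\},
\]
which is nonempty by hypothesis (a) (it contains $\underline u$), convex and strongly closed in $S^{1}_{0}(\Omega)$, hence weakly closed, and the functional
\[
	J_{\lambda}(w):=\frac12\int_{\Omega}|\nabla_{\mathbb{G}}w|^{2}-\frac{\lambda}{q+1}\int_{\Omega}|w|^{q+1}-\frac{1}{2^{\star}_{Q}}\int_{\Omega}|w|^{2^{\star}_{Q}},
\]
which is of class $C^{1}$ on $S^{1}_{0}(\Omega)$ since the two lower-order terms are of class $C^{1}$ on $L^{q+1}(\Omega)$ and on $L^{2^{\star}_{Q}}(\Omega)$ respectively, into which $S^{1}_{0}(\Omega)$ embeds continuously. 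On $\mathcal M$ one has $0<\underline u\le w\le\overline u$, so $\int_{\Omega}|w|^{q+1}\le\int_{\Omega}\overline u^{q+1}$ and $\int_{\Omega}|w|^{2^{\star}_{Q}}\le\int_{\Omega}\overline u^{2^{\star}_{Q}}$ are bounded uniformly for $w\in\mathcal M$; hence $J_{\lambda}$ is coercive on $\mathcal M$, and it is sequentially weakly lower semicontinuous there (the Dirichlet term is, and for $w_{n}\rightharpoonup w$ in $S^{1}_{0}(\Omega)$ with $w_{n}\in\mathcal M$ the compact embedding $S^{1}_{0}(\Omega)\hookrightarrow L^{1}(\Omega)$ together with the uniform bound $0\le w_{n}\le\overline u$ lets one pass to the limit in the lower-order terms by dominated convergence along subsequences). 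The direct method then furnishes $u\in\mathcal M$ with $J_{\lambda}(u)=\min_{\mathcal M}J_{\lambda}$. Since $u\ge\underline u$ and $\underline u>0$ in $\Omega$ by Definition \ref{def:weak_sub_super_sol}(i) — and, by hypothesis (b), $u$ is bounded below by a positive constant on each $\mathcal{O}\Subset\Omega$ — it remains only to prove that $u$ satisfies the equation weakly, i.e.\ $\langle J_{\lambda}'(u),\varphi\rangle=0$ for every $\varphi\in C^{\infty}_{0}(\Omega)$, where $\langle J_{\lambda}'(u),\psi\rangle=\int_{\Omega}\langle\nabla_{\mathbb{G}}u,\nabla_{\mathbb{G}}\psi\rangle_{\mathfrak{g}_{1}}-\int_{\Omega}f(u)\psi$ and $f(s):=\lambda s^{q}+s^{2^{\star}_{Q}-1}$.

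For this, the idea is as follows. Fix $\varphi\in C^{\infty}_{0}(\Omega)$ and $\varepsilon>0$ and set
\[
	\varphi^{\varepsilon}:=(u+\varepsilon\varphi-\overline u)^{+},\qquad\varphi_{\varepsilon}:=(\underline u-u-\varepsilon\varphi)^{+},
\]
which belong to $S^{1}_{0}(\Omega)$, are non-negative, and have disjoint supports since $\underline u\le\overline u$. Then $v_{\varepsilon}:=u+\varepsilon\varphi-\varphi^{\varepsilon}+\varphi_{\varepsilon}=\min\{\overline u,\max\{\underline u,u+\varepsilon\varphi\}\}$ lies in $\mathcal M$, so convexity of $\mathcal M$ together with the minimality of $u$ gives $\langle J_{\lambda}'(u),v_{\varepsilon}-u\rangle\ge0$, that is
\[
	\varepsilon\,\langle J_{\lambda}'(u),\varphi\rangle\ \ge\ \langle J_{\lambda}'(u),\varphi^{\varepsilon}\rangle-\langle J_{\lambda}'(u),\varphi_{\varepsilon}\rangle .
\]
I then claim the right-hand side is $o(\varepsilon)$ as $\varepsilon\to0^{+}$. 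To see this for the first term, write $\nabla_{\mathbb{G}}u=\nabla_{\mathbb{G}}(u-\overline u)+\nabla_{\mathbb{G}}\overline u$ and test the weak supersolution inequality for $\overline u$ — valid for non-negative test functions in $S^{1}_{0}(\Omega)$ by density of $C^{\infty}_{0}(\Omega)$ — against $\varphi^{\varepsilon}\ge0$; since $f$ is increasing on $(0,\infty)$ and $u\le\overline u$ on $A_{\varepsilon}:=\{\varphi^{\varepsilon}>0\}=\{u+\varepsilon\varphi>\overline u\}$, the reaction contribution has a favourable sign, and using that $\nabla_{\mathbb{G}}\varphi^{\varepsilon}=\nabla_{\mathbb{G}}(u+\varepsilon\varphi-\overline u)$ on $A_{\varepsilon}$ (and $\equiv0$ off it) while discarding the non-negative term $\int_{A_{\varepsilon}}|\nabla_{\mathbb{G}}(u-\overline u)|^{2}$ one is left with
\[
	\langle J_{\lambda}'(u),\varphi^{\varepsilon}\rangle\ \ge\ \varepsilon\int_{A_{\varepsilon}}\langle\nabla_{\mathbb{G}}(u-\overline u),\nabla_{\mathbb{G}}\varphi\rangle_{\mathfrak{g}_{1}}.
\]
As $\varepsilon\downarrow0$ the sets $A_{\varepsilon}$ decrease to (a subset of) $\{u=\overline u\}$, on which $\nabla_{\mathbb{G}}(u-\overline u)=0$ a.e., so by dominated convergence the last integral tends to $0$ and $\langle J_{\lambda}'(u),\varphi^{\varepsilon}\rangle\ge o(\varepsilon)$. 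Symmetrically, testing the weak subsolution inequality for $\underline u$ against $\varphi_{\varepsilon}\ge0$ gives $\langle J_{\lambda}'(u),\varphi_{\varepsilon}\rangle\le o(\varepsilon)$. Plugging both bounds into the previous display, dividing by $\varepsilon$ and letting $\varepsilon\to0^{+}$ yields $\langle J_{\lambda}'(u),\varphi\rangle\ge0$; replacing $\varphi$ with $-\varphi$ gives the reverse inequality, hence $\langle J_{\lambda}'(u),\varphi\rangle=0$. Together with $\underline u\le u\le\overline u$ and $u>0$ in $\Omega$, this shows that $u$ is the desired weak solution of \eqref{EqProblem}$_{\lambda}$.

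The step I expect to be the main obstacle is precisely this passage from constrained minimiser to free solution, and within it the careful bookkeeping it demands: that $\varphi^{\varepsilon},\varphi_{\varepsilon}$ indeed belong to $S^{1}_{0}(\Omega)$ and may be used as test functions in the weak sub/supersolution inequalities; that the Stampacchia-type fact $\nabla_{\mathbb{G}}w=0$ a.e.\ on $\{w=0\}$ holds for $w\in S^{1}_{0}(\Omega)$ in the sub-Riemannian setting; and that the ``error'' integrals over the nested sets $A_{\varepsilon}$ (and its analogue for $\underline u$) are genuinely $o(\varepsilon)$ — which rests on the monotonicity of $f$ on $(0,\infty)$ and on $A_{\varepsilon}\downarrow\{u=\overline u\}\cap\{\varphi>0\}$. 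Everything else — coercivity and weak lower semicontinuity of $J_{\lambda}$ on $\mathcal M$, its $C^{1}$ regularity, and the use of the compact embedding and of the density of $C^{\infty}_{0}(\Omega)$ in $S^{1}_{0}(\Omega)$ — is routine in view of the preliminaries recorded in Section \ref{sec:Prel}.
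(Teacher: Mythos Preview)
Your argument is correct and is exactly the variational Perron method of \cite[Theorem~I.2.4]{Struwe} to which the paper defers (the paper omits the proof, citing \cite{Struwe} and \cite[Lemma~3.4]{BiGaVe}). One minor remark: hypothesis~(b) plays no essential role in your proof---and indeed is unnecessary here, since $f(s)=\lambda s^{q}+s^{2^{\star}_{Q}-1}$ is continuous on $[0,\infty)$---it is a vestige of the singular setting of \cite{BiGaVe}.
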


\section{Proof of Theorem \ref{thm:main} - Part A) and part B)}\label{sec:First_Solution}
%
The goal of this Section is to prove the existence of a positive weak solution to \eqref{EqProblem}. 
To begin with, we define
\begin{equation}\label{eq:DefinitionLambda}
	\Lambda := \sup \{ \lambda >0: \eqref{EqProblem} \textrm{ admits a weak solution}\}.
\end{equation}
Our task now is pretty standard and it consists in the following steps: 
\begin{itemize}
	\item[I)] prove that $0<\Lambda<+\infty$;
	\item[II)] prove that problem \eqref{EqProblem} admits a weak solution for every $0<\lambda\leq \Lambda$.
\end{itemize}

\noindent We split the proof of I) in two lemmas. 
\begin{lemma}\label{lem:lambda0}
	Let $\Lambda$ as defined in \eqref{eq:DefinitionLambda}. Then $\Lambda >0$.
	\begin{proof}
		We will show that there exists a sufficiently small $\lambda >0$ such that 
		\eqref{EqProblem} has a solution. To this aim, we will use Lemma \ref{lem:Perron} exhibiting both a super and a subsolution.
		Looking for a supersolution, we consider the following auxiliary torsion problem
		\begin{equation}
			\left\{\begin{array}{rl}
				-\Delta_{\mathbb{G}} V = 1 & \textrm{in } \Omega,\\
				V = 0 & \textrm{on } \partial \Omega,
			\end{array}\right.
		\end{equation}
		\noindent whose unique solution is provided by Lax-Milgram Theorem. Moreover, by a classical Stampacchia iteration method, it holds that $V\in L^{\infty}(\Omega)$. Observe further that for any positive constant $C$,
		the function $C\cdot x^p-x$, with $p>1$, has negative value for $x>0$ sufficiently small.
		Therefore, for every $C,C'\in \R^+$
		there exists $\lambda^*>0$ such that 
		for every $\lambda<\lambda^*$, 
		\[
		\exists \, m_\lambda\in \R^+:\quad \lambda\cdot  C'\cdot m_\lambda^q+C\cdot m_\lambda^p-m_\lambda\leq0.
		\]
		We fix $\lambda<\lambda^*$ and set $C=\|V\|_{L^{\infty}(\Omega)}^p$, $C'=\|V\|_{L^{\infty}(\Omega)}^q$. 
		We define $\overline u_1:=m_\lambda V$, which weakly verifies
		\[
		\left\{\begin{array}{rll}
			-\Delta_{\G}\overline u_1&=m_\lambda \geq \lambda\overline u_1^q+\overline u_1^p &\ \m{in }\Omega\\
			\overline u_1 &>0 &\ \m{in }\Omega\\
			\overline u_1 &=0 &\ \m{on }\de\Omega,
			\end{array}\right.
		\]
				therefore it is a weak supersolution of \eqref{EqProblem}.\\
				Regarding the weak subsolution to \eqref{EqProblem}, we choose the unique solution $\underline u_\lambda$ to \eqref{eq:Sublinear_Problem}. 
				We can now conclude the proof by appealing Lemma \ref{lem:Perron}. Indeed, by Lemma \ref{lem:Weak_Comparison_Model} with $w=\overline{u}_1$ and $v=\underline{u}_{\lambda}$, we get that 
				$\overline{u}_1 \geq \underline{u}_{\lambda}$, which is condition a) of Lemma \ref{lem:Perron}. Regarding b) of Lemma \ref{lem:Perron} it is enough to recall \cite[Corollary 2.3]{BiGaVe}. This closes the proof.				
	\end{proof}
\end{lemma}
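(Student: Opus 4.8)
The plan is to prove that \eqref{EqProblem}$_\lambda$ is solvable for every sufficiently small $\lambda>0$; this immediately gives $\Lambda>0$. I would obtain such a solution by the classical device of sandwiching it between an ordered pair of a weak sub- and supersolution and then invoking the Perron-type Lemma~\ref{lem:Perron}.

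\emph{Supersolution.} I would look for a supersolution of the form $\overline u=m\,V$, where $V$ is the torsion function solving $-\Delta_{\mathbb{G}}V=1$ in $\Omega$, $V=0$ on $\partial\Omega$: existence is granted by Lax--Milgram and boundedness by a Stampacchia/Moser iteration, so $\|V\|_{L^\infty(\Omega)}<+\infty$, while $V>0$ in $\Omega$ by the strong maximum principle for the sub-Laplacian. For $\overline u=mV$ to be a weak supersolution of \eqref{EqProblem}$_\lambda$ it suffices that, pointwise,
\[
m\ =\ -\Delta_{\mathbb{G}}\overline u\ \ge\ \lambda\,m^{q}\|V\|_{L^\infty(\Omega)}^{q}+m^{2^{\star}_{Q}-1}\|V\|_{L^\infty(\Omega)}^{2^{\star}_{Q}-1}.
\]
Since $2^{\star}_{Q}-1>1$, the map $m\mapsto m^{2^{\star}_{Q}-1}\|V\|_{L^\infty(\Omega)}^{2^{\star}_{Q}-1}-m$ is negative on a small interval $(0,m_0)$, and the sublinear term $\lambda\,m^{q}\|V\|_{L^\infty(\Omega)}^{q}$ can be absorbed once $\lambda$ is small; thus there is $\lambda^\ast>0$ such that for every $\lambda<\lambda^\ast$ some $m=m_\lambda\in(0,m_0)$ satisfies the above inequality, and $\overline u:=m_\lambda V$ is a weak supersolution.

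\emph{Subsolution, ordering and conclusion.} As a subsolution I would take the unique positive $\underline u_\lambda\in S^1_0(\Omega)\cap L^\infty(\Omega)$ solving the sublinear problem \eqref{eq:Sublinear_Problem} (Theorem~\ref{thm:Sublinear_Problem}): from $-\Delta_{\mathbb{G}}\underline u_\lambda=\lambda\underline u_\lambda^{q}\le\lambda\underline u_\lambda^{q}+\underline u_\lambda^{2^{\star}_{Q}-1}$ it is a weak subsolution. Since $m_\lambda\ge\lambda m_\lambda^{q}\|V\|_{L^\infty(\Omega)}^{q}\ge\lambda\,\overline u^{q}$, the pair $(v,w)=(\underline u_\lambda,\overline u)$ is admissible in the comparison Lemma~\ref{lem:Weak_Comparison_Model}, which yields $\underline u_\lambda\le\overline u$ in $\Omega$, i.e.\ hypothesis a) of Lemma~\ref{lem:Perron}. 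Hypothesis b), the locally uniform positive lower bound for $\underline u_\lambda$, is the interior positivity of solutions of \eqref{eq:Sublinear_Problem}, which one can cite from \cite[Corollary~2.3]{BiGaVe}. Lemma~\ref{lem:Perron} then produces a weak solution $u$ of \eqref{EqProblem}$_\lambda$ with $\underline u_\lambda\le u\le\overline u$; in particular $u>0$, whence $\lambda\le\Lambda$ and $\Lambda>0$.

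The soft part of this argument (choosing $m_\lambda$, applying the comparison principle and the Perron lemma) is routine; the hard part will be the two sub-elliptic inputs, namely the $L^\infty$-bound together with strict interior positivity of the torsion function $V$, and the locally uniform positivity of $\underline u_\lambda$ required by hypothesis b) of Lemma~\ref{lem:Perron}. Both are standard for H\"ormander sum-of-squares operators (Bony's strong maximum principle, Moser/Stampacchia iteration, weak Harnack inequality), but they must be invoked carefully since $C^{1,\alpha}$ regularity up to $\partial\Omega$ is not available at characteristic points — only interior information is needed here, so this causes no real trouble.
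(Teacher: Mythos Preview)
Your proposal is correct and follows essentially the same route as the paper: a supersolution of the form $m_\lambda V$ built from the torsion function $V$ via the algebraic inequality $\lambda C' m^q + C m^{2^\star_Q-1}\le m$ for small $\lambda$, the sublinear solution $\underline u_\lambda$ as subsolution, ordering via Lemma~\ref{lem:Weak_Comparison_Model}, and \cite[Corollary~2.3]{BiGaVe} for hypothesis~b) of Lemma~\ref{lem:Perron}. The only additions on your side are the explicit mention of $V>0$ (via Bony's strong maximum principle) and the closing commentary on boundary regularity, neither of which departs from the paper's argument.
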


\begin{lemma}\label{lem:LambdaInf}
	Let $\Lambda$ as defined in \eqref{eq:DefinitionLambda}. Then, $\Lambda < +\infty$.
\end{lemma}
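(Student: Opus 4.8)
The plan is to transplant the classical nonexistence argument of \cite{ABC} to the sub-elliptic setting, testing the equation against the first Dirichlet eigenfunction of $-\Delta_{\mathbb{G}}$. First I would fix $\lambda_1=\lambda_1(\Omega)>0$, the first eigenvalue of $-\Delta_{\mathbb{G}}$ on $\Omega$ with homogeneous Dirichlet datum: it is strictly positive because of the Poincaré inequality encoded in the continuous embedding $S^{1}_{0}(\Omega)\hookrightarrow L^{2}(\Omega)$, and it is attained on a first eigenfunction $\phi_1\in S^{1}_{0}(\Omega)$ thanks to the compactness of that embedding (recalled in Section \ref{sec:functional_setting}). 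By a Harnack-type inequality / strong maximum principle for sub-Laplacians one may take $\phi_1>0$ a.e.\ in $\Omega$, and of course $\phi_1\in L^{2^{\star}_{Q}}(\Omega)$.

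Next I would isolate the elementary fact that makes everything work. Since $q\in(0,1)$ and $\beta:=2^{\star}_{Q}-1>1$, the function $t\mapsto \lambda t^{q-1}+t^{\beta-1}$ attains a positive minimum $m(\lambda)$ on $(0,\infty)$, and an explicit computation gives $m(\lambda)=c_{q,\beta}\,\lambda^{(\beta-1)/(\beta-q)}$, so that $m(\lambda)\to+\infty$ as $\lambda\to+\infty$. Consequently there is $\Lambda^{\ast}>0$ such that
$$\lambda\,t^{q}+t^{\,2^{\star}_{Q}-1}>\lambda_1\,t\qquad\text{for every }t>0\text{ and every }\lambda>\Lambda^{\ast}.$$

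Then I would argue by contradiction: assume $u\in S^{1}_{0}(\Omega)$ is a weak solution of \eqref{EqProblem}$_{\lambda}$ for some $\lambda>\Lambda^{\ast}$. Since $\Omega$ is bounded, the Sobolev embedding gives $\lambda u^{q}+u^{2^{\star}_{Q}-1}\in L^{(2^{\star}_{Q})'}(\Omega)$ and $u,\phi_1\in L^{2}(\Omega)$, so by density of $C^{\infty}_{0}(\Omega)$ in $S^{1}_{0}(\Omega)$ the weak formulation of \eqref{EqProblem}$_{\lambda}$ may be tested with $\varphi=\phi_1$ and the eigenvalue equation $-\Delta_{\mathbb{G}}\phi_1=\lambda_1\phi_1$ with $\varphi=u$; subtracting yields
$$\lambda_1\int_{\Omega}u\,\phi_1=\int_{\Omega}\bigl(\lambda u^{q}+u^{2^{\star}_{Q}-1}\bigr)\phi_1 .$$
By Hölder's inequality both sides are finite, and since $u>0$, $\phi_1>0$ a.e.\ in $\Omega$ and the integrand on the right exceeds $\lambda_1 u\phi_1$ pointwise, the right-hand side is strictly larger than $\lambda_1\int_{\Omega}u\phi_1$ — a contradiction. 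Hence \eqref{EqProblem}$_{\lambda}$ has no weak solution for $\lambda>\Lambda^{\ast}$, and therefore $\Lambda\le\Lambda^{\ast}<+\infty$.

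The argument is essentially routine; the only points deserving a little care are the admissibility of $\phi_1$ and of $u$ as test functions, which rests on the integrability computations above together with the density of $C^{\infty}_{0}(\Omega)$ in $S^{1}_{0}(\Omega)$, and the strict positivity of $\phi_1$, which in the Carnot setting is obtained from a Harnack inequality rather than from any boundary regularity. The conceptual heart is simply that the concave–convex balance $q<1<2^{\star}_{Q}-1$ forces $t\mapsto\lambda t^{q}+t^{2^{\star}_{Q}-1}$ to lie above the line $t\mapsto\lambda_1 t$ once $\lambda$ is large enough.
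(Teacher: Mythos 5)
Your proposal is correct and follows essentially the same route as the paper: test the equation against the positive first Dirichlet eigenfunction of $-\Delta_{\mathbb{G}}$ and use that, since $q<1<2^{\star}_{Q}-1$, the inequality $\lambda t^{q}+t^{2^{\star}_{Q}-1}>\mu_1 t$ holds for all $t>0$ once $\lambda$ is large, forcing nonexistence for such $\lambda$. Your additional remarks (the explicit growth rate of the minimum of $t\mapsto\lambda t^{q-1}+t^{2^{\star}_{Q}-2}$ and the admissibility of the test functions) are consistent with, and merely elaborate on, the paper's argument.
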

	\begin{proof}
	We consider the first eigenfunction $e_1$ of the operator $-\Delta_\G$
	with respect to the first Dirichlet eigenvalue $\mu_1$. In particular, the following characterization holds:
	\[
	\mu_1=\min\left\{\| |\nabla_\G u|\|^2_{L^2(\Omega)}:\ u\in S^1_0(\Omega)\m{ and }\|u\|^2_{L^2(\Omega)}=1\right\}.
	\]
	Knowing that $\|e_1\|_{L^2}=1$, $e_1>0$ a.e.~in $\Omega$ and $\| |\nabla_\G e_1|\|^2_{L^2}=\mu_1$,
	we have that any solution $u$ to \eqref{EqProblem} for some $\lambda$
	must verify
	\[
	\int_\Omega \langle \nabla_\G u,\nabla_\G e_1\rangle =\mu_1 \int_\Omega u e_1=\int_\Omega \lambda u^qe_1+u^pe_1.
	\]
	As $q\in (0,1)$ and $p>1$, for $\Lambda^*$ sufficiently big, we have
	\[
	\Lambda^*x^q+x^p>\mu_1x\quad\forall x\in \R^+.
	\]
	Therefore we must have $\lambda <\Lambda^*$ and this  proves $\Lambda\leq \Lambda^*<+\infty$.
\end{proof}

\medskip

Combining Lemma \ref{lem:lambda0} with Lemma \ref{lem:LambdaInf} we get I). Let us now turn our attention to the proof of II). Firstly, let us define the functional $I_{\lambda}$ naturally associated to \eqref{EqProblem}:
\begin{equation}\label{eq:Def_Ilambda}
	I_{\lambda}(u) := \dfrac{1}{2}\int_{\Omega}|\nabla_{\G}u|^2 - \dfrac{\lambda}{q+1}\int_{\Omega}|u|^{q+1} - \dfrac{1}{2^{\star}_{Q}}\int_{\Omega}|u|^{2^{\star_{Q}}}, \quad u\in S^{1}_{0}(\Omega).
\end{equation}

\begin{teo}\label{thm:Existence_First}
	Problem \eqref{EqProblem} admits at least
	one weak solution $u_\lambda\in S^{1}_{0}(\Omega)$ for every $\lambda \in (0,\Lambda]$.
\end{teo}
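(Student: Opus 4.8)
The plan is to prove existence of a weak solution $u_\lambda$ for every $\lambda\in(0,\Lambda]$ by the sub/supersolution (variational Perron) method encapsulated in Lemma~\ref{lem:Perron}, so the whole task reduces to producing an ordered pair consisting of a weak subsolution below a weak supersolution, both in $S_0^1(\Omega)$. For the subsolution I would again take the unique solution $\underline u_\lambda$ of the purely sublinear problem~\eqref{eq:Sublinear_Problem} from Theorem~\ref{thm:Sublinear_Problem}: it is positive, lies in $S_0^1(\Omega)\cap L^\infty(\Omega)$, and since $u^{2^\star_Q-1}\geq 0$ it is automatically a weak subsolution of~\eqref{EqProblem}$_\lambda$. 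Moreover the uniform lower bound required in hypothesis~(b) of Lemma~\ref{lem:Perron} (namely $\underline u_\lambda\geq C(\mathcal O,\underline u_\lambda)>0$ on every $\mathcal O\Subset\Omega$) is exactly what~\cite[Corollary 2.3]{BiGaVe} provides, as already invoked in the proof of Lemma~\ref{lem:lambda0}. So both conditions on the subsolution are for free; the real content is the supersolution.

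For the supersolution I would split into two cases. If $\lambda<\Lambda$, then by the very definition of $\Lambda$ in~\eqref{eq:DefinitionLambda} there exists $\lambda'\in(\lambda,\Lambda]$ for which~\eqref{EqProblem}$_{\lambda'}$ admits a weak solution $u_{\lambda'}$; since $\lambda<\lambda'$ and $u_{\lambda'}>0$, one has $\lambda u_{\lambda'}^q + u_{\lambda'}^{2^\star_Q-1}\leq \lambda' u_{\lambda'}^q + u_{\lambda'}^{2^\star_Q-1}$, so $u_{\lambda'}$ is a weak supersolution of~\eqref{EqProblem}$_\lambda$ — this is the construction announced in the introduction. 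If $\lambda=\Lambda$, the above trick is unavailable, so I would instead take an increasing sequence $\lambda_n\uparrow\Lambda$ with corresponding weak solutions $u_n:=u_{\lambda_n}$, obtain a uniform $S_0^1(\Omega)$-bound on $u_n$ (testing the equation against $u_n$ itself and using the sublinear and critical Sobolev estimates, together with $J$-type energy control as in Theorem~\ref{thm:Sublinear_Problem}), extract a weak limit $u_\Lambda$, and check it is a weak supersolution — or, more cleanly, argue directly that $u_\Lambda$ is already a weak solution by passing to the limit in the weak formulation, handling the critical term via the a.e.\ convergence and Fatou/Brezis–Lieb. Alternatively one can simply note that $\Lambda$ is attained: this is precisely statement B) at the endpoint, so a limiting argument of this kind is unavoidable here.

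Once a supersolution $\overline u$ (either $u_{\lambda'}$ or the limit $u_\Lambda$) is in hand, the ordering $\underline u_\lambda\leq\overline u$ a.e.\ in $\Omega$ — hypothesis~(a) of Lemma~\ref{lem:Perron} — follows from the comparison principle Lemma~\ref{lem:Weak_Comparison_Model}: indeed $\overline u$ satisfies $-\Delta_\G\overline u\geq \lambda\overline u^q$ (dropping the nonnegative critical term) while $\underline u_\lambda$ satisfies $-\Delta_\G\underline u_\lambda=\lambda\underline u_\lambda^q$, so taking $w=\overline u$, $v=\underline u_\lambda$ yields $\overline u\geq\underline u_\lambda$. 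Applying Lemma~\ref{lem:Perron} then produces a weak solution $u_\lambda\in S_0^1(\Omega)$ with $\underline u_\lambda\leq u_\lambda\leq\overline u$, which in particular is positive and completes the proof. The main obstacle I anticipate is the endpoint $\lambda=\Lambda$: the definition of $\Lambda$ as a supremum does not a priori guarantee it is achieved, so one genuinely needs the compactness/limiting argument above, and care is required to pass to the limit in the critical nonlinearity $u_n^{2^\star_Q-1}$ (where $S_0^1$-weak convergence does not immediately give convergence of the nonlinear term) — the uniform bounds and monotonicity in $\lambda_n$ should be enough, but this is the delicate point rather than a routine one.
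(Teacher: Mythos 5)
Your plan coincides with the paper's own proof: for $\lambda<\Lambda$ the paper takes exactly the sublinear solution of \eqref{eq:Sublinear_Problem} as subsolution and a solution $u_{\lambda'}$ of \eqref{EqProblem}$_{\lambda'}$ with $\lambda'\in(\lambda,\Lambda)$ as supersolution, orders them via Lemma \ref{lem:Weak_Comparison_Model}, verifies hypothesis b) of Lemma \ref{lem:Perron} through \cite[Corollary 2.3]{BiGaVe}, and at the endpoint $\lambda=\Lambda$ likewise takes $\lambda_k\uparrow\Lambda$, extracts a weak limit and passes to the limit in the weak formulation (the critical term being handled by a.e.\ convergence plus uniform $L^{2^\star_Q}$-bounds, as you anticipate). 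The only ingredient you leave implicit is the source of the uniform $S^1_0(\Omega)$-bound: in the paper it comes from combining the tested identity \eqref{eq:testwithukzero} with the strict negativity $I_{\lambda_k}(u_k)<0$, which is available precisely because the Perron solution minimizes $I_{\lambda_k}$ over the order interval and the subsolution has negative energy by Theorem \ref{thm:Sublinear_Problem} — your phrase ``$J$-type energy control'' points at exactly this mechanism.
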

\begin{proof}
	As for the proof of Lemma \ref{lem:lambda0}, we find either a weak subsolution and a weak supersolution and then apply Lemma \ref{lem:Perron}. \\
	As long as the weak subsolution is concerned, we can take the unique solution $\underline{u}_{\lambda}$ of \eqref{eq:Sublinear_Problem}. 
	
	Let us now look for a weak supersolution. In doing this we profit of the very definition of $\Lambda$, which guarantess the existence of $\lambda' \in (\lambda, \Lambda)$ such that \eqref{EqProblem} (witht $\lambda =\lambda'$) admits a weak solution $u_{\lambda'}$. Clearly, this is a weak supersolution of \eqref{EqProblem}.\\
	By Lemma \ref{lem:Weak_Comparison_Model}, with $w = u_{\lambda'}$ and $v=w_{\lambda}$, it follows that
	\begin{equation}\label{eq:Claim}
		w_{\lambda}(g) \leq u_{\lambda'}(g), \quad \textrm{for a.e. } g\in \Omega.
	\end{equation}

We now set $\overline{u}= u_{\lambda'}$ and $\underline{u} = w_{\lambda}$,	
	and we apply Lemma \ref{lem:Perron}: this immediately yields that problem \eqref{EqProblem} admits a weak solution $u_{\lambda}$ for every $\lambda \in (0,\Lambda)$. Moreover, recalling the definition of $I_{\lambda}$ in \eqref{eq:Def_Ilambda}, such a solution satisfies that
	$$I_{\lambda}(u_\lambda) = \min\{u\in S_0^1(\Omega):\,w_{\lambda}\leq u\leq u_{\lambda'}\}
	\leq I_{\lambda}(w_{\lambda}).$$
	In particular,
	by Theorem \ref{thm:Sublinear_Problem} we have
	\begin{equation} \label{eq:Ilambdaulambdaneg}
		I_{\lambda}(u_{\lambda}) \leq I_{\lambda}(w_{\lambda}) \leq J_{\lambda}(w_{\lambda}) <0.
	\end{equation}
	It remains to consider the case $\lambda = \Lambda$. The proof
	is rather standard and pretty similar to that of \cite[Lemma 3.5]{BiGaVe}. We report it here for the sake of completeness.
	To begin with, we choose a monotone increasing sequence $\{\lambda_k\}_k\subseteq(0,\Lambda)$ such that 
     $\lambda_{k} \to \Lambda$ as $k\to+\infty$. Now, for each $k \in \mathbb{N}$, we set
	$$u_k := u_{\lambda_k}\in S^{1}_{0}(\Omega),$$
	\noindent where $u_{\lambda_k}$ is the weak solution of problem \eqref{EqProblem} (with $\lambda = \lambda_k$) constructed
	as above by means of Lemma \ref{lem:Perron}. Thanks to 
	\eqref{eq:Ilambdaulambdaneg}, for every $k\geq 1$ we have
	\begin{equation} \label{eq:Ilambdakneg}
		I_{\lambda_k}(u_{k}) 
		= \dfrac{1}{2} \int_{\Omega}|\nabla_{\mathbb{G}}u_k|^2 - \dfrac{\lambda_k}{q+1}\int_{\Omega}|u_k|^{q+1} - \dfrac{1}{2^{\star}_{Q}}\int_{\Omega}|u_k|^{2^{\star}_{Q}} <0.
	\end{equation}
	Moreover, by using $\varphi = u_k$ in \eqref{eq:weak_sub_super_sol},  and recalling that $u_k$
	solves \eqref{EqProblem} (with $\lambda = \lambda_k$), we get
	\begin{equation} \label{eq:testwithukzero}
		\int_{\Omega}|\nabla_{\mathbb{G}}u_k|^2 -\lambda_k\int_\Omega u_k^{q+1}
		-\int_\Omega u_k^{2^{\star}_{Q}} = 0.
	\end{equation}
	Combining \eqref{eq:Ilambdakneg} with \eqref{eq:testwithukzero},
	we notice that the sequence $\{u_k\}_k$ is  bounded in~$S^{1}_{0}(\Omega)$.
	Therefore, we can find a function
	$$u_{\Lambda}\in S^{1}_{0}(\Omega),$$ 
	such that
	(up to a subsequence and as $k\to+\infty$)
	\begin{itemize}
		\item[a)] $u_k\to u_{\Lambda}$ weakly in $S^{1}_{0}(\Omega)$ and strongly
		in $L^p(\Omega)$ for $1\leq p <2^{\star}_{Q}$;
		\item[b)] $u_k\to u_{\Lambda}$ a.e.\,in $\Omega$.
	\end{itemize}
	We now observe that, being $\{\lambda_k\}_k$ increasing, it follows that $\lambda_k\geq \lambda_1$ for every $k\geq 1$.
	Moreover, arguing as above yields that
	$u_{\lambda_k}\geq w_{\lambda_1}$, and thus
	$$u_{\Lambda} > 0\quad\text{a.e.\,in $\Omega$}.$$
	Moreover, since $u_k$ solves problem \eqref{EqProblem} (with $\lambda = \lambda_k$),
	we have
	$$\int_{\Omega}\langle \nabla_{\mathbb{G}}u_k,\nabla_{\mathbb{G}}\varphi\rangle_{\mathfrak{g}_{1}} -\lambda_k\int_\Omega u_k^{q}\varphi 
	-\int_\Omega u_k^{2^{\star}_{Q}-1}\varphi = 0\quad\text{for every $\varphi\in S^{1}_{0}(\Omega)$}.$$
Therefore, passing to the limit as $k\to+\infty$ in the above identity, and by dominated convergence, we get that $u_{\Lambda}$ satisfies
	$$\int_{\Omega}\langle \nabla_{\mathbb{G}}u_{\Lambda},\nabla_{\mathbb{G}}\varphi\rangle_{\mathfrak{g}_{1}} -\Lambda\int_\Omega u_{\Lambda}^{q}\varphi
	-\int_\Omega u_{\Lambda}^{2^{\star}_{Q}-1}\varphi = 0\quad\text{for every $\varphi\in S^{1}_{0}(\Omega)$},$$
	\noindent which shows that $u_{\Lambda}$ is actually a weak solution of
	problem \eqref{EqProblem} (with $\lambda = \Lambda$). 
%
	This closes the proof.
\end{proof}

\medskip

Before tackling the problem of a second solution, we focus
on the behavior of the functional $I_\lambda$ around $u_\lambda$. In particular we will show that the first solution $u_{\lambda}$ is actually a local minimum in the $S_{0}^{1}(\Omega)$-topology. As recalled in the Introduction, in the Euclidean case this is performed exploiting a famous result by Brezis and Nirenberg \cite{BNH1C1}. In our case we have to follow a different approach due to the lack of boundary regularity of the solution. In particular, we adapt the strategy used in \cite{AbDiVa} which in turn is inspired by a work of Alama.

\begin{lemma}\label{lem:locmin}
	For $\lambda\in (0,\Lambda)$, if $u_\lambda$ is the solution
	presented in Theorem \ref{thm:Existence_First}, then
	$u_\lambda$ is a local minimum for $I_\lambda$ in the $S^1_0(\Omega)$-topology,
	meaning that there exists $r_0>0$ such that for any $u\in S^1_0(\Omega)$,
	\[
	I_\lambda(u_\lambda)\leq I_\lambda(u) \quad \textrm{ for all } u\in S^{1}_{0}(\Omega) \textrm{ with } \|u-u_\lambda\|_{S_{0}^{1}(\Omega)}< r_0 .
	\]
\end{lemma}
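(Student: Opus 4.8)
The plan is to argue by contradiction following the strategy of Alama (as adapted in \cite{AbDiVa}). Suppose $u_\lambda$ is not a local minimizer in the $S_0^1(\Omega)$-topology: then there is a sequence $v_j\to u_\lambda$ in $S_0^1(\Omega)$ with $I_\lambda(v_j)<I_\lambda(u_\lambda)$. The key device is to replace the minimization of $I_\lambda$ on small balls around $u_\lambda$ by a \emph{constrained} minimization over the order interval $[u_\lambda,\overline u]$, where $\overline u=u_{\lambda'}$ is the supersolution used in Theorem \ref{thm:Existence_First}. More precisely, for $\varepsilon>0$ small, consider the problem of minimizing $I_\lambda$ over the set $\mathcal{K}_\varepsilon:=\{u\in S_0^1(\Omega):\ \|u-u_\lambda\|_{S_0^1(\Omega)}\le\varepsilon,\ u\le \overline u \text{ a.e.}\}$ (and possibly with the lower constraint $u\ge u_\lambda'$ for a slightly smaller parameter, to keep positivity). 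First I would show that this constrained problem attains its minimum at some $w_\varepsilon\in\mathcal{K}_\varepsilon$, by the direct method: $I_\lambda$ is coercive-enough on the bounded set $\mathcal{K}_\varepsilon$, weakly lower semicontinuous (the gradient term is convex, and the lower-order terms pass to the limit using the compact embedding $S_0^1(\Omega)\hookrightarrow L^p(\Omega)$ for $p<2_Q^\star$ together with the truncation $u\le\overline u\in L^\infty$, which controls the critical term by dominated convergence), and $\mathcal{K}_\varepsilon$ is weakly closed.

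Next I would analyze the Euler–Lagrange/variational inequality satisfied by $w_\varepsilon$. The upper obstacle $\overline u$ is \emph{strict} away from $\partial\Omega$ in the following sense: since $\overline u=u_{\lambda'}$ solves \eqref{EqProblem}$_{\lambda'}$ with $\lambda'>\lambda$ and $u_\lambda$ solves \eqref{EqProblem}$_\lambda$, a strong comparison argument on compact subsets (using the Bony-type strong maximum principle / Harnack inequality valid for $-\Delta_\G$, as in \cite{BiGaVe}) gives $u_\lambda<\overline u$ strictly in the interior, and one quantifies how the solutions detach near the boundary. The crucial point is that $w_\varepsilon$, being a minimizer over $[u_\lambda,\overline u]$ (at least locally, once $\varepsilon$ is small enough that the $S_0^1$-ball constraint is inactive — which follows because $I_\lambda(w_\varepsilon)\le I_\lambda(u_\lambda)$ forces $w_\varepsilon$ to stay close to $u_\lambda$), is actually a weak solution of \eqref{EqProblem}$_\lambda$ on the interior: the obstacle constraint cannot be active because of the strict inequality, so the variational inequality becomes an equation. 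Then one invokes the uniqueness-type information or, more directly, the fact that $I_\lambda(w_\varepsilon)\le I_\lambda(u_\lambda)$ together with $u_\lambda\le w_\varepsilon\le\overline u$ and \eqref{eq:Ilambdaulambdaneg}; combined with the contradiction hypothesis one concludes $w_\varepsilon\ne u_\lambda$ for small $\varepsilon$, producing a second solution lying strictly below $\overline u$ and above $u_\lambda$ but arbitrarily close to $u_\lambda$ in $S_0^1$ — which, by letting $\varepsilon\to 0$, contradicts the fact that $u_\lambda$ is an \emph{isolated} solution in the order interval, or alternatively directly contradicts $I_\lambda(w_\varepsilon)\ge I_\lambda(u_\lambda)$ which holds since $u_\lambda$ is the minimizer of $I_\lambda$ over $[u_\lambda,\overline u]$ by construction in Theorem \ref{thm:Existence_First}.

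Let me reorganize the logic to make the contradiction sharp, since that last step is the heart of the matter. By Theorem \ref{thm:Existence_First}, $u_\lambda$ minimizes $I_\lambda$ over the full order interval $[w_\lambda,u_{\lambda'}]$. Given the contradictory sequence $v_j\to u_\lambda$ with $I_\lambda(v_j)<I_\lambda(u_\lambda)$, replace $v_j$ by its truncation $\tilde v_j:=\min\{\max\{v_j,w_\lambda\},u_{\lambda'}\}$; a standard computation (the truncation does not increase $I_\lambda$, because cutting at a supersolution from above and a subsolution from below decreases the energy — here is where one uses that $u_{\lambda'}$ is a supersolution and $w_\lambda$ a subsolution of \eqref{EqProblem}$_\lambda$) gives $I_\lambda(\tilde v_j)\le I_\lambda(v_j)<I_\lambda(u_\lambda)$, and $\tilde v_j\in[w_\lambda,u_{\lambda'}]$ — contradicting minimality of $u_\lambda$ over that interval, \emph{provided} the truncation still converges to $u_\lambda$ in $S_0^1$, which it does since $u_\lambda\in[w_\lambda,u_{\lambda'}]$ and truncation is continuous in $S_0^1$. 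The only gap in this clean argument is the energy-monotonicity of truncation at the critical exponent, which requires the validity of the standard chain-rule/truncation estimates in $S_0^1(\Omega)$ — these hold because $S_0^1(\Omega)$ is a Dirichlet-type space and $w_\lambda\in L^\infty$, $u_{\lambda'}\in L^\infty$ control all the nonlinear terms.

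\textbf{Main obstacle.} The delicate point — and the reason the authors cannot simply quote \cite{BNH1C1} — is precisely justifying that the constrained minimizer (or the truncated competitor) has not "escaped" to the boundary in a way that breaks the energy comparison, i.e.\ controlling the behavior of $u_\lambda$ and $\overline u$ near $\partial\Omega$ in the absence of $C^{1,\alpha}$ boundary regularity at characteristic points. I expect the proof to spend most of its effort establishing the quantitative interior detachment $\overline u - u_\lambda \ge c$ on compacts (via Harnack) and handling the near-boundary region by the $S_0^1$-convergence plus the truncation inequalities, rather than by pointwise gradient bounds.
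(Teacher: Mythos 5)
Your overall skeleton is the right one and matches the paper's: argue by contradiction with a sequence $v_n\to u_\lambda$ in $S_0^1(\Omega)$ with $I_\lambda(v_n)<I_\lambda(u_\lambda)$, truncate at the supersolution $\overline u=u_{\lambda'}$ (solution of \eqref{EqProblem}$_{\lambda'}$, $\lambda<\lambda'<\Lambda$), and contradict the minimality of $u_\lambda$ over the order-constrained set coming from the Perron construction. But the step you flag as "the only gap" and then dismiss — that truncating from above at a supersolution (and from below at a subsolution) does not increase $I_\lambda$, "by standard chain-rule/truncation estimates" and $L^\infty$ bounds — is precisely the heart of the lemma, and as stated it is not true. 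Writing $w_n=(v_n-\overline u)_+$, the supersolution property of $\overline u$ only controls the \emph{first-order} term $\int_\Omega\langle\nabla_\G\overline u,\nabla_\G w_n\rangle_{\mathfrak g_1}\geq\int_\Omega(\lambda\overline u^q+\overline u^{2^\star_Q-1})w_n$; what remains to be absorbed by $\tfrac12\|w_n\|_{S_0^1(\Omega)}^2$ is the second-order remainder $\int_{\{v_n>\overline u\}}\bigl[h(w_n+\overline u)-h(\overline u)-\lambda\overline u^qw_n-\overline u^{2^\star_Q-1}w_n\bigr]$ with $h(u)=\tfrac{\lambda}{q+1}u_+^{q+1}+\tfrac1{2^\star_Q}u_+^{2^\star_Q}$. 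For the critical (convex) part this remainder is \emph{nonnegative}, i.e. it has the wrong sign for your claim, so truncation can a priori increase the energy; no chain rule and no $L^\infty$ bound (which, incidentally, is not available for $u_{\lambda'}$ and is not used in the paper) gives the inequality for free.

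What actually closes this gap in the paper is a quantitative, sequence-dependent estimate: (i) one first proves $|S_n|\to0$, where $S_n=\{v_n>\overline u\}$, using $v_n\to u_\lambda$ in $L^2(\Omega)$ together with $\overline u>u_\lambda$ a.e. (splitting $S_n$ into the sets $E(n,\delta)$, $F(n,\delta)$); (ii) the concave remainder is bounded by $\tfrac q2\int_\Omega w_n^2\overline u^{q-1}$ via Taylor expansion and then by $\tfrac q2\|w_n\|_{S_0^1(\Omega)}^2$ via a Picone-type inequality (Ruzhansky--Suragan), which is where the hypothesis $q<1$ enters decisively; (iii) the critical remainder is bounded by $o(1)\|w_n\|_{S_0^1(\Omega)}^2$ using the Sobolev inequality together with $|S_n|\to0$. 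Only after these three estimates does one obtain $I_\lambda(v_n)\geq I_\lambda(u_\lambda)+\tfrac12(1-q-o(1))\|w_n\|_{S_0^1(\Omega)}^2+\tfrac12\|v_n^-\|_{S_0^1(\Omega)}^2$, forcing $w_n=0$ and $v_n^-=0$ for large $n$ and hence the contradiction with minimality over the constrained set. None of (i)--(iii) appears in your proposal (your first-paragraph variant with the obstacle problem and Harnack-based "strict detachment" has the same missing ingredient, plus the unjustified claim that the $S_0^1$-ball constraint is inactive), so as written the argument does not go through.
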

\begin{proof}

	In the following of this proof, we denote by $\overline u$ the solution
	of \eqref{EqProblem} (with $\lambda = \overline \lambda$) for some value $\overline \lambda$
	such that  $\lambda<\overline \lambda <\Lambda$.
	
%
%
	
	Let's suppose by contradiction that there exists a sequence $\{v_n\}$ in $S^1_0(\Omega)$ verifying that
	$\|v_n-u_\lambda\|_{S_{0}^{1}(\Omega)}\to0$ and $I_\lambda(v_n)<I_\lambda(u_\lambda)$ for each $n$.
	We also introduce two ausiliary functions,
	\begin{align*}
		w_n&:=(v_n-\overline u)_+\, ,\\
		u_n&:=\max(0,\min(v_n,\overline u)),
	\end{align*}
	and the sets 
	\begin{align*}
		T_n&:=\{x\in \Omega:\ u_n(x)=v_n(x)\},\\
		S_n&:=\supp(w_n)\cap\Omega.
	\end{align*}
	
	We want to prove that 
	\begin{equation}\label{Sn0}
		\lim_{n\to+\infty}|S_n|=0.
	\end{equation}
	Consider the two sets,
	\begin{align*}
		E(n,\delta)&:=\{x\in \Omega:\ v_n(x)\geq\overline u(x)>u_\lambda(x)+\delta\},\\
		F(n,\delta)&:=\{x\in \Omega:\ v_n(x)\geq\overline u(x),\ \m{and}\ \overline u(x)\leq u_\lambda(x)+\delta\}.
	\end{align*}
	By construction, $S_n\subset E_n(n,\delta)\cup F(n,\delta)$ for each $n$ and each $\delta>0$.
	We are going to show that
	for any $\varepsilon>0$ and a suitable choice of $\delta>0$ and $n\in \N$, we have both $|E(n,\delta)|,|F(n,\delta)|<\frac{\varepsilon}{2}$.
	\begin{enumerate}
		\item We start from $E(n,\delta)$.
		By definition of the sequence $v_n$, we have $\|v_n-u_\lambda\|_{L^2(\Omega)}\to0$.
		Therefore if we fix $\delta$ and $\varepsilon$ there exists $n_0$ such that for any $n\geq n_0$,
		$\frac{\delta^2\varepsilon}{2}>\|v_n-u_\lambda\|_{L^2}$. As a consequence,
		\[
		\frac{\delta^2\varepsilon}{2}>\int_\Omega |v_n-u_\lambda|^2\geq\int_{E(n,\delta)}|v_n-u_\lambda|^2>\delta^2\cdot|E(n,\delta)|.
		\]
		This implies that $|E(n,\delta)|<\frac{\varepsilon}{2}$ for any $\delta$ and any $n\geq n_0$.
		
		\item Let's consider $F(n,\delta)$. If,
		\begin{align*}
			\overline F(\delta)&:=\{x\in \Omega:\ \overline u(x)\leq u_\lambda(x)+\delta\}\\
			\overline F&:=\{x\in \Omega:\ \overline u(x)\leq u_\lambda(x)\},
		\end{align*}
		then by construction
		\[
		0=|\overline F|=\left|\bigcap_{m=1}^\infty \overline F\left(\frac{1}{m}\right)\right|=\lim_{m\to+\infty}\left|\overline F\left(\frac{1}{m}\right)\right|.
		\]
		Therefore, for a suitable $m_0$ we get that $|\overline F(\delta)|<\frac{\varepsilon}{2}$ for any $\delta<\frac{1}{m_0}$
		and \cor{a fortiori} $|F(n,\delta)|<\frac{\varepsilon}{2}$ for any $n$ because $F(n,\delta)\subset\overline F(\delta)$.\newline
	\end{enumerate}
	
	This proves \eqref{Sn0}. Let's now consider the function
	\[
	h(u):=\frac{\lambda}{q+1}u_+^{q+1}+\frac{u_+^{2^\star_Q}}{2^\star_Q},
	\]
	and observe that by definition $\Omega=T_n\cup S_n$ because $u_n\leq v_n$.
	We develop the evaluations,
	\begin{align*}
		I_\lambda(v_n)&=\frac{1}{2}\|v_n\|_{S_{0}^{1}(\Omega)}^2-\int_\Omega h(v_n)\\
		&\geq \frac{1}{2}\|v_n^+\|_{S_{0}^{1}(\Omega)}^2+\frac{1}{2}\|v_n^-\|_{S_{0}^{1}(\Omega)}^2-\int_{T_n} h(v_n)-\int_{S_n}h(v_n)\\
		&(\m{because }u_n=\overline u\m{ and }v_n=w_n+\overline u\m{ on }S_n)\\
		&=\frac{1}{2}\|v_n^+\|_{S_{0}^{1}(\Omega)}^2+\frac{1}{2}\|v_n^-\|_{S_{0}^{1}(\Omega)}^2-\int_\Omega h(u_n)-\int_{S_n}(h(w_n+\overline u)-h(\overline u))\\
		&=I_\lambda(u_n)+\frac{1}{2}\left(\|v_n^+\|_{S_{0}^{1}(\Omega)}^2-\|u_n\|_{S_{0}^{1}(\Omega)}^2\right)+\frac{1}{2}\|v_n^-\|_{S_{0}^{1}(\Omega)}^2-\int_{S_n}\left(h(w_n+\overline u)-h(\overline u)\right).
	\end{align*}
	Here as $v_n^+=u_n+w_m$, using that $I_\lambda(u_n)\geq I_\lambda (u_\lambda)$
	because $u_n\in M$ for any $n$, and that $\overline u$ is a supersolution of \eqref{EqProblem},
	we get 
	\begin{equation}\label{eqADV}
		\begin{aligned}
		I_\lambda(v_n)&\geq I_\lambda (u_\lambda)+\frac{1}{2}\|w_n\|_{S_{0}^{1}(\Omega)}^2+\langle u_n,w_n\rangle_{S_{0}^{1}(\Omega)}+\frac{1}{2}\|v_n^-\|_{S_{0}^{1}(\Omega)}^2
		-\int_{S_n}\left(h(w_n+\overline u)-h(\overline u)\right)\\
		&\geq I_\lambda(u_\lambda)+\frac{1}{2}\|w_n\|_{S_{0}^{1}(\Omega)}^2+\frac{1}{2}\|v_n^-\|_{S_{0}^{1}(\Omega)}^2
		-\int_{S_n}\left(h(w_n+\overline u)-h(\overline u)-\lambda \overline u^qw_n-\overline u^{2^\star_Q-1}w_m\right).
		\end{aligned}
	\end{equation}
	
	Let's use the notation
	\[
	f_r(x,y)=\frac{1}{r+1}(x+y)^{r+1}-\frac{1}{r+1}y^{r+1}-xy^r.
	\]
	As $w_n\geq0$ and $\overline u\geq0$, simply
	by Taylor expasion we have
	\begin{equation}\label{tayq}
		0\leq f_{q}(w_n,\overline u)\leq \frac{q}{2}w_n^2\overline u^{q-1}.
	\end{equation}
		Mooreover, by \cite[Theorem 3.4]{Ruzhansky_Suragan}, we have 
	\begin{equation}\label{picone}
		 \lambda\int_\Omega \overline u^{q-1}w_n^2\leq \int_\Omega (-\Delta \overline u)\frac{w_n^2}{\overline u}\leq \|w_n\|_{S_{0}^{1}(\Omega)}^2,
	\end{equation}
	and combining the last two equations, we get
	\begin{equation}\label{combq}
		\lambda\int_\Omega f_q(w_n,\overline u)\leq \frac{q}{2}\int_\Omega w_n^2\overline u^{q-1}\leq \frac{q}{2}\|w_n\|_{S_{0}^{1}(\Omega)}^2.
	\end{equation}
 	By similar reasoning and using Sobolev inequality, we obtain
 \begin{equation}
 	\int_\Omega f_{2^\star_Q-1}(w_n,\overline u)\leq o(1)\cdot \|w_n\|_{S_{0}^{1}(\Omega)}^2.
 \end{equation}
 As $h(w_n+\overline u)=f_q(w_n,\overline u)+f_{2^\star_Q-1}(w_n,\overline u)$, 
 by combining the last results with \eqref{eqADV},
 we obtain
 \[
 I_\lambda(v_n)\leq I_\lambda(u_\lambda)+\frac{1}{2}\|w_n\|_{S_{0}^{1}(\Omega)}^2(1-q-o(1))+\frac{1}{2}\|v_n^-\|_{S_{0}^{1}(\Omega)}.
 \]
 As $I_\lambda(v_n)<I_\lambda(u_\lambda)$ by hypothesis,
 and $q<1$, then for $n$ sufficiently big we must have $v_n^-=0$,
 but this imply $v_n\geq0$ and therefore $v_n\in M$,
 thus contradicting $I_\lambda(v_n)<I_\lambda(u_\lambda)$.
 This completes the proof.	\end{proof}
\medskip

\color{black}

\medskip

\section{Proof of Theorem \ref{thm:main} - Part C)}\label{sec:Second_Solution}

The aim of this section is to prove that the problem \eqref{EqProblem} actually admits a second
solution for every $\lambda\in (0,\Lambda)$.
We briefly recall that, thanks to Theorem \ref{thm:Existence_First} and Lemma \ref{lem:locmin}, we have that 
\begin{itemize}
	\item for every $\lambda \in (0,\Lambda]$, the problem \eqref{EqProblem} admits a solution denoted by $u_{\lambda}$;
	\item for every $\lambda \in (0,\Lambda)$, $u_{\lambda}$ is a local minimum for $I_{\lambda}$ in the $S_{0}^{1}$-topology, i.e.
	there exists $r_0>0$ such that for any $u\in S^1_0(\Omega)$,
	\[
	I_\lambda(u_\lambda)\leq I_\lambda(u) \quad \textrm{ for all } u\in S^{1}_{0}(\Omega) \textrm{ with } \|u-u_\lambda\|_{S_{0}^{1}(\Omega)}< r_0 .
	\]
\end{itemize}

We now adapt to our setting the strategy used in \cite{Tarantello}.
Firstly, we are going to consider two cases:
\begin{enumerate}
	\item for every $r\in (0,r_0)$,
	\[
	\inf_{\|u-u_\lambda\|_{S_{0}^{1}(\Omega)}=r}I_\lambda(u)=I_\lambda(u_\lambda),
	\]
	\item there exists $r\in (0,r_0)$ such that 
	\[
	\inf_{\|u-u_\lambda\|_{S_{0}^{1}(\Omega)}=r}I_\lambda(u)>I_\lambda(u_\lambda).
	\]
\end{enumerate}
We treat separately the two cases in the following sub-sections. 
\newline

\subsection{First case}\label{subsec:CaseI}


We introduce the space
\begin{equation}\label{def:hlambda}
	H_\lambda:=\{u\in S^1_0(\Omega):\,u\geq u_\lambda \m{ a.e.~in }\Omega\}.
\end{equation}

\noindent By the standing assumptions in (1), there exists 
$\{u_k\}_k\subset H_{\lambda}$ such that 
\begin{enumerate}[i)]
	\item $\|u_k-u_\lambda\|_{S^1_0(\Omega)} = r$ for every $k\geq 1$;	
	\item as $k\to+\infty$ we have $I_\lambda(u_k)\to I_\lambda(u_\lambda)$.
\end{enumerate}
We also introduce the space
$$X_\lambda := \{u\in H_\lambda:\,r-\bar r\leq\|u-u_\lambda\|_{S^1_0(\Omega)}\leq r+\bar r\},$$
where $\bar r>0$ is taken sufficiently small to have $r-\bar r>0$ and $r+\bar r<r_0$.
The set $X_\lambda$ becomes a complete metric space once endowed with the distance associated to the norm $\|\cdot\|_{S^1_0(\Omega)}$.

We can now proceed very similarly to \cite{BiGaVe},
by applying the Ekeland's Variational Principle 
and therefore obtaining a sequence
 $\{w_k\}_k\subset X_\lambda$ such that
\begin{equation} \label{eq:EkelandCaseA}
	\begin{split}
		\mathrm{i)}&\,\, I_\lambda(w_k)\leq I_\lambda(u_k)\leq I_\lambda(u_\lambda)+\frac{1}{k^2}, \\
		\mathrm{ii)}&\,\,\|w_k-u_k\|_{S^1_0(\Omega)}\leq \frac{1}{k}, \\
		\mathrm{iii)}&\,\, I_\lambda(w_k)\leq I_\lambda(u)+\frac{1}{k}\,
		\|w_k-u\|_{S^1_0(\Omega)}\quad\text{for every $u\in X_\lambda$}.    
	\end{split}
\end{equation}
By boundedness of $\{w_k\}$ in $S^1_0(\Omega)$, there exists
$w_\lambda\in S^1_0(\Omega)$ such that 
the following are true (up to a sub-sequence),
\begin{equation} \label{eq:limitvkCaseA}
	\begin{split}
		\mathrm{i)}&\,\,\text{$w_k\to w_\lambda$ weakly in $S^1_0(\Omega)$}; \\
		\mathrm{ii)}&\,\,\text{$w_k\to w_\lambda$ strongly in $L^r(\Omega)$ for every $1\leq r<2^\star_Q$}; \\
		\mathrm{iii)}&\,\,\text{$w_k\to w_\lambda$ pointwise
			a.e.\,in $\Omega$}.
	\end{split}
\end{equation}

First we show that the limit function $w_{\lambda}$ is a solution to \eqref{EqProblem}. This is the content of the following

\begin{lemma}\label{lem:casoa1}
	The function $w_\lambda$ is a weak solution of \eqref{EqProblem}.
\end{lemma}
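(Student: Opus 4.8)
The plan is to show that $w_\lambda$ is a critical point of $I_\lambda$ by exploiting the Ekeland inequality \eqref{eq:EkelandCaseA}iii), and then upgrade this to the statement that $w_\lambda$ solves \eqref{EqProblem}$_\lambda$ in the weak sense of Definition \ref{def:weak_sub_super_sol}. First I would use a standard consequence of the Ekeland variational principle: since $w_k$ lies strictly inside the annulus $X_\lambda$ (because $\|w_k - u_k\|_{S^1_0} \le 1/k$ and $\|u_k - u_\lambda\|_{S^1_0} = r$ with $r - \bar r < r < r + \bar r$, so for $k$ large $w_k$ is an interior point of $X_\lambda$), the condition iii) forces $\|I_\lambda'(w_k)\|_{(S^1_0)'} \to 0$, at least when tested against directions that keep us inside $H_\lambda$. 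More precisely, for $0 \le \varphi \in S^1_0(\Omega)$ and $t > 0$ small, the function $w_k + t\varphi$ still belongs to $H_\lambda$ (since $w_k \ge u_\lambda$) and, for $k$ large, to $X_\lambda$; plugging it into iii), dividing by $t$ and letting $t \to 0^+$ gives $\langle I_\lambda'(w_k), \varphi\rangle \ge -\tfrac{1}{k}\|\varphi\|_{S^1_0}$. One also needs the reverse inequality in admissible directions; this is obtained by testing with $w_k + t(\psi - w_k)$ for $\psi \in H_\lambda$ near $w_k$, or more simply by the projection/truncation trick of Tarantello which shows that the constraint $u \ge u_\lambda$ is not active in the limit. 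The upshot is $\langle I_\lambda'(w_k), \varphi\rangle \to 0$ for every $\varphi \in S^1_0(\Omega)$.

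Next I would pass to the limit in the expression $\langle I_\lambda'(w_k), \varphi\rangle = \int_\Omega \langle \nabla_\G w_k, \nabla_\G \varphi\rangle - \lambda \int_\Omega w_k^q \varphi - \int_\Omega w_k^{p}\varphi$. The first term converges by weak convergence \eqref{eq:limitvkCaseA}i); the term $\int_\Omega w_k^q \varphi$ converges by the strong $L^r$-convergence for $r < 2^\star_Q$ (taking $r = q+1 < 2$) together with dominated convergence; for the critical term $\int_\Omega w_k^{p}\varphi$ one invokes the Brezis–Lieb type argument: from a.e. convergence \eqref{eq:limitvkCaseA}iii) and boundedness of $w_k$ in $S^1_0(\Omega) \hookrightarrow L^{2^\star_Q}(\Omega)$, the sequence $w_k^{p} = w_k^{2^\star_Q - 1}$ is bounded in $L^{(2^\star_Q)'}(\Omega)$ and converges a.e.\ to $w_\lambda^{p}$, hence converges weakly in $L^{(2^\star_Q)'}(\Omega)$, which is exactly the duality needed to integrate against $\varphi \in L^{2^\star_Q}(\Omega)$. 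Therefore $\langle I_\lambda'(w_\lambda), \varphi\rangle = 0$ for all $\varphi \in S^1_0(\Omega)$, i.e.\ $w_\lambda$ is a weak solution of the equation $-\Delta_\G w_\lambda = \lambda w_\lambda^q + w_\lambda^{p}$.

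Finally I would verify the two remaining requirements of Definition \ref{def:weak_sub_super_sol}: that $w_\lambda > 0$ a.e.\ in $\Omega$, and that the test functions may be taken in $C^\infty_0(\Omega)$ (which follows by density from $S^1_0(\Omega)$). Positivity is immediate here, since every $w_k \in H_\lambda$ satisfies $w_k \ge u_\lambda$ a.e., and $u_\lambda > 0$ a.e.\ in $\Omega$ by Theorem \ref{thm:Existence_First}; passing to the a.e.\ limit gives $w_\lambda \ge u_\lambda > 0$ a.e. In particular $w_\lambda \in H_\lambda$, so it is genuinely a nonnegative (indeed positive) weak solution and not merely a critical point of $I_\lambda$.

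The main obstacle I anticipate is the derivation of $\|I_\lambda'(w_k)\|_{(S^1_0)'} \to 0$ from the Ekeland condition on the \emph{constrained} set $X_\lambda$: one must carefully check that the obstacle constraint $u \ge u_\lambda$ and the annular constraint $r - \bar r \le \|u - u_\lambda\| \le r + \bar r$ do not obstruct the variational argument. For the annular constraint this is handled by choosing $\bar r$ small and $k$ large so that $w_k$ is interior; for the obstacle, the key point (following \cite{Tarantello}, and as used in \cite{BiGaVe}) is that testing only with nonnegative $\varphi$ gives one inequality for free, and the opposite direction is recovered using that $(w_k - u_\lambda)$ is admissible downward — here one uses $w_k \ge u_\lambda$ together with the fact that $u_\lambda$ is itself a subsolution, so that replacing $w_k$ by $\max(w_k - t\varphi, u_\lambda)$ decreases $I_\lambda$ by a controlled amount. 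Making this precise is the technical heart of the lemma; the passage to the limit afterwards is routine.
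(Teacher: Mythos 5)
Your setup (interiority of $w_k$ with respect to the annular constraint for large $k$, the one--sided inequality $\langle I_\lambda'(w_k),\varphi\rangle\geq -\tfrac1k\|\varphi\|_{S^1_0(\Omega)}$ for $\varphi\geq0$, the term--by--term limit passage against a fixed $\varphi$, and the positivity of $w_\lambda$ via $w_\lambda\geq u_\lambda$) is sound and matches the paper's framework. The genuine gap is the central claim that ``$\langle I_\lambda'(w_k),\varphi\rangle\to0$ for every $\varphi\in S^1_0(\Omega)$.'' The Ekeland inequality \eqref{eq:EkelandCaseA}-iii) only controls directions pointing \emph{into} $H_\lambda$; the reverse inequality cannot be obtained by ``testing with $w_k+t(\psi-w_k)$, $\psi\in H_\lambda$ near $w_k$,'' because $w_k$ may touch the obstacle $u_\lambda$ on a set of positive measure, so for a general (sign-changing or nonpositive) $\varphi$ no admissible $\psi$ realizes the direction $\varphi$. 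If instead you run the truncation trick at fixed $k$, i.e. test with $w_k+\varepsilon\varphi+(w_k+\varepsilon\varphi-u_\lambda)_-$, the correction term is of size $\varepsilon$ (not $o(\varepsilon)$) on the contact set $\{w_k=u_\lambda\}\cap\{\varphi<0\}$, so dividing by $\varepsilon$ and letting $\varepsilon\to0^+$ does \emph{not} kill it: the constraint is not ``inactive,'' neither along the sequence nor, a priori, in the limit. Note also that the full statement $I_\lambda'(w_k)\to0$ in $(S^1_0(\Omega))'$ would essentially require the strong convergence $w_k\to w_\lambda$ together with $I_\lambda'(w_\lambda)=0$, which are exactly the contents of Lemma \ref{lem:casoa2} and of the present lemma; as a first step it is circular.

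The paper's proof uses the opposite order of limits, and this is where the real work lies. One first derives the variational inequality \eqref{eq:ineq2.21haitao} for $w_k$ against \emph{all} $w\in H_\lambda$, then inserts the projected test function $w=w_k+\varepsilon\varphi+(\varphi_{k,\varepsilon})_-$ with $\varepsilon$ \emph{fixed}, and passes $k\to+\infty$. This limit is not routine: the gradient term $\int_\Omega\langle\nabla_\G w_k,\nabla_\G(\varphi_{k,\varepsilon})_-\rangle_{\mathfrak g_1}$ is a product of two weakly convergent sequences and requires the inequality of \cite[Lemma 3.4]{BadTar} (as in \cite{BiGaVe}), while the nonlinear terms use the pointwise bound $w_k^{2^\star_Q-1}(\varphi_{k,\varepsilon})_-\leq(u_\lambda+\varepsilon|\varphi|)^{2^\star_Q}$ and dominated convergence. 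Only then does one send $\varepsilon\to0^+$, at the level of the limit function: there the contribution of $(\varphi_\varepsilon)_-$ is controlled using $w_\lambda\geq u_\lambda$ and the equation satisfied by $u_\lambda$ (in particular $\nabla_\G w_\lambda=\nabla_\G u_\lambda$ a.e.\ on the contact set and the sub/solution property of $u_\lambda$ give the correct sign), yielding $\langle I_\lambda'(w_\lambda),\varphi\rangle\geq0$ for every $\varphi$ and hence equality. Your proposal skips precisely this two-step limit and the treatment of the contact set, so as written it does not prove the lemma.
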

\proof
Given $w\in H_\lambda$ we consider $\varepsilon_0$
sufficiently small that $w_k+\varepsilon(w-w_k)\in X_\lambda$ for each $0<\varepsilon<\varepsilon_0$.
For $k$ sufficiently big such an $\varepsilon_0$ always exists 
because, indeed
\begin{multline*}
 r-\frac{1}{k}\leq \|u_k-u_\lambda\|_{S^1_0(\Omega)}-\|w_k-u_k\|_{S^1_0(\Omega)}\leq \|w_k-u_\lambda\|_{S^1_0(\Omega)}\\
 \leq \|u_k-u_\lambda\|_{S^1_0(\Omega)}+\|w_k-u_k\|_{S^1_0(\Omega)}\leq r+\frac{1}{k}.
\end{multline*}
By setting $u=w_k+\varepsilon(w-w_k)$ in \eqref{eq:EkelandCaseA}
we get
\[
\frac{I_\lambda(w_k+\varepsilon(w-w_k))-I_\lambda(w_k)}{\varepsilon}\geq -\frac{1}{k}\, \|w-w_k\|_{S^1_0(\Omega)}.
\]
Letting $\varepsilon\to0^+$, we get
\begin{multline}\label{eq:ineq2.21haitao}
		 -\frac{1}{k}\, \|w-w_k\|_{S^1_0(\Omega)}\leq \int_\Omega\langle \nabla_\G w_k,\nabla_\G(w-w_k)\rangle_{\mathfrak{g}_{1}}\\
	-\int_\Omega w_k^{2^\star_Q-1}(w-w_k)
		-\lambda \int_\Omega w_k^q(w-w_k)\quad\textrm{ for every } w\,\,\in H_\lambda.
\end{multline}
For any $\varphi\in S^1_0(\Omega)$ and $\varepsilon>0$ we introduce the functions
$$\varphi_{k,\varepsilon}:=w_k+\varepsilon\varphi-u_\lambda \quad \textrm{ and } \quad 
\varphi_{\varepsilon}:=w_\lambda+\varepsilon\varphi-u_\lambda.$$
We further set $w:=w_k+\varepsilon\varphi+(\varphi_{k,\varepsilon})_-\in H_\lambda$, 
then, by \eqref{eq:ineq2.21haitao},
\begin{multline}\label{eq:ineq2.22haitao}
-\frac{1}{k}\, \|\varepsilon\varphi+(\varphi_{k,\varepsilon})_-\|_{S^1_0(\Omega)}\leq \int_\Omega\langle \nabla_\G w_k,\nabla_\G(\varepsilon\varphi+(\varphi_{k,\varepsilon})_-)\rangle_{\mathfrak{g}_{1}}\\
-\int_\Omega w_k^{2^\star_Q-1}(\varepsilon\varphi+(\varphi_{k,\varepsilon})_-)
-\lambda\int_\Omega w_k^{q}(\varepsilon\varphi+(\varphi_{k,\varepsilon})_-).
\end{multline}
From \eqref{eq:limitvkCaseA},
$$(\varphi_{\varepsilon,k})_-\to(\varphi_\varepsilon)_-  \quad \textrm{ a.e. in } \Omega, \textrm{ as } k\to+\infty.$$
Moreover, we have
\[
w_k^{2^\star_Q-1}(\varphi_{k,\varepsilon})_-=w_k^{2^\star_Q-1}(u_\lambda-\varepsilon\varphi-w_k)\cdot \mathbf{1}_{\{u_\lambda-\varepsilon\varphi-w_k\}}
\leq (u_\lambda+\varepsilon|\varphi|)^{2^\star_Q}.
\]
Therefore, by Dominated Convergence we get
\begin{equation}\label{eq:gather1}
	\begin{split}
	\lim_{k\to+\infty}\biggl(\int_\Omega w_k^{2^\star_Q-1}(\varepsilon\varphi+(\varphi_{k,\varepsilon})_-)
	&+\lambda\int_\Omega w_k^{q}(\varepsilon\varphi+(\varphi_{k,\varepsilon})_-)\biggl)\\
	&=\int_\Omega w_\lambda^{2^\star_Q-1}(\varepsilon\varphi+(\varphi_\varepsilon)_-)+
	\lambda\int_\Omega w_\lambda^{q}(\varepsilon\varphi+(\varphi_\varepsilon)_-)
	\end{split}
\end{equation}
For the other term, similarly to  \cite[Lemma 3.4]{BadTar}
we have
\[
\int_\Omega \langle \nabla_\G w_k,\nabla_\G (\varphi_{k,\varepsilon})_-\rangle_{\mathfrak{g}_{1}}\leq
\int_\Omega\langle \nabla_\G{w_\lambda},\nabla_\G(\varphi_{\varepsilon})_-\rangle_{\mathfrak{g}_{1}} +o(1)\ \m{as}\ k\to+\infty
\]
and because $v_k\rightharpoonup v_\lambda$  weakly in $S_0^1(\Omega)$, we obtain
\begin{equation}\label{eq:gather2}
	\int_\Omega\langle \nabla_\G w_k,\nabla_\G(\varepsilon\varphi+(\varphi_{k,\varepsilon})_-)\rangle_{\mathfrak{g}_{1}}
	\leq \int_\Omega\langle \nabla_\G {w_\lambda},\nabla_\G(\varepsilon\varphi+(\varphi_{\varepsilon})_-)\rangle_{\mathfrak{g}_{1}} +o(1)\ \m{as}\ k\to+\infty.
\end{equation}
As $\|w_k\|_{S^1_0(\Omega)}$ is uniformly bounded
w.r.t. $k$, we have the same
for $\|(\varphi_{k,\varepsilon})_-\|_{S^1_0(\Omega)}$.
Therefore we can pass to the limit as $k\to+\infty$
 in \eqref{eq:ineq2.22haitao}, 
and by recalling \eqref{eq:gather1} and \eqref{eq:gather2}  we obtain
\begin{equation}
	\int_\Omega\langle \nabla_\G w_\lambda,\nabla_\G(\varepsilon\varphi+(\varphi_\varepsilon)_-)\rangle_{\mathfrak{g}_1}\geq
	\int_\Omega w_\lambda^{2^\star_Q-1}(\varepsilon\varphi+(\varphi_\varepsilon)_-)+
	\lambda\int_\Omega w_\lambda^{q}(\varepsilon\varphi+(\varphi_\varepsilon)_-).
\end{equation}
Finally, we can conlude as in \cite[Lemma 4.1]{BiGaVe}, getting
\begin{equation}
	\int_\Omega\langle \nabla_\G w_\lambda,\nabla_\G\varphi\rangle_{\mathfrak{g}_{1}}-\lambda\int_\Omega w_\lambda^{q}\varphi
	-\int_\Omega v_\lambda^{2^\star_Q-1}\varphi\geq0,
\end{equation}
and by the arbitrariness of $\varphi \in S^1_0(\Omega)$ 
we conclude that $w_\lambda$
is a weak solution of \eqref{EqProblem}.\fine

\color{black}

The following technical Lemma closely follows \cite[Lemma 4.2]{BiGaVe}.

\begin{lemma}\label{lem:casoa2}
	If $w_\lambda$ is as above,
	then $\|w_\lambda-u_\lambda\|_{S^{1}_{0}(\Omega)}=r$.
\end{lemma}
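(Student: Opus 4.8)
\textbf{Proof plan for Lemma \ref{lem:casoa2}.}
The plan is to argue by contradiction, assuming that $\|w_\lambda - u_\lambda\|_{S^1_0(\Omega)} < r$ (the inequality $\le r$ being automatic: by weak lower semicontinuity of the norm together with constraint i) in \eqref{eq:EkelandCaseA} and the convergence $\|w_k - u_k\|_{S^1_0(\Omega)} \to 0$ from ii), one gets $\|w_\lambda - u_\lambda\|_{S^1_0(\Omega)} \le \lim \|w_k - u_\lambda\|_{S^1_0(\Omega)} = r$, so only strict inequality needs to be excluded). If $\|w_\lambda - u_\lambda\|_{S^1_0(\Omega)} < r$, then $w_\lambda$ lies in the interior of the ball of radius $r$ around $u_\lambda$; since by Lemma \ref{lem:casoa1} $w_\lambda$ is a weak solution of \eqref{EqProblem}$_\lambda$ with $w_\lambda \ge u_\lambda$, and since $u_\lambda$ is the \emph{local minimizer} provided by Lemma \ref{lem:locmin}, I want to compare $I_\lambda(w_\lambda)$ with $I_\lambda(u_\lambda)$ and with the Ekeland infimum level, and derive that the approximating sequence $\{w_k\}$ actually converges strongly, forcing $\|w_\lambda - u_\lambda\|_{S^1_0(\Omega)} = r$ after all.

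The key steps, in order: First, establish that $w_k \to w_\lambda$ \emph{strongly} in $S^1_0(\Omega)$. This is where the bulk of the work lies. Using iii) of \eqref{eq:EkelandCaseA} together with Lemma \ref{lem:casoa1}'s argument, $\{w_k\}$ is a Palais–Smale-type sequence for $I_\lambda$ constrained to $X_\lambda$ at the level $c := \lim_k I_\lambda(w_k) = I_\lambda(u_\lambda)$ (by i) of \eqref{eq:EkelandCaseA} and the hypothesis of Case (1)). Testing the approximate equation \eqref{eq:ineq2.21haitao} with $w = u_\lambda$ and also (after the truncation trick of Lemma \ref{lem:casoa1}) with $w = w_\lambda$, subtract and use the standard Brezis–Lieb splitting on the critical term: writing $z_k := w_k - w_\lambda$, one gets $\|z_k\|_{S^1_0(\Omega)}^2 = \int_\Omega |z_k|^{2^\star_Q} + o(1)$. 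Now invoke the Sobolev inequality \eqref{eq:Sobolev_Ineq}: if $\ell := \lim_k \|z_k\|_{S^1_0(\Omega)}^2 > 0$, then $\ell \ge S_\G^{Q/2}$, which by the energy identity would push $c = I_\lambda(w_\lambda) + \tfrac{1}{2}\ell - \tfrac{1}{2^\star_Q}\ell + o(1) \ge I_\lambda(w_\lambda) + \tfrac{1}{Q}S_\G^{Q/2}$. Since $I_\lambda(w_\lambda) \ge I_\lambda(u_\lambda)$ (as $w_\lambda \in H_\lambda$ and $u_\lambda$ is the minimizer over that region), this gives $c \ge I_\lambda(u_\lambda) + \tfrac{1}{Q}S_\G^{Q/2}$, contradicting $c = I_\lambda(u_\lambda)$. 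Hence $\ell = 0$, i.e. $w_k \to w_\lambda$ strongly.

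Second, conclude: from strong convergence and constraint i) of \eqref{eq:EkelandCaseA}, $\|w_\lambda - u_\lambda\|_{S^1_0(\Omega)} = \lim_k \|w_k - u_\lambda\|_{S^1_0(\Omega)}$; combining $\|w_k - u_k\|_{S^1_0(\Omega)} \le 1/k$ with $\|u_k - u_\lambda\|_{S^1_0(\Omega)} = r$ yields $\|w_k - u_\lambda\|_{S^1_0(\Omega)} \to r$, hence $\|w_\lambda - u_\lambda\|_{S^1_0(\Omega)} = r$, which is the claim. (Alternatively, if one does not want to invoke strong convergence directly, one observes that $\|w_\lambda - u_\lambda\|_{S^1_0(\Omega)} < r$ would place $w_\lambda$ strictly inside the annulus $X_\lambda$; combined with $I_\lambda(w_\lambda) \le c = I_\lambda(u_\lambda)$ from lower semicontinuity and the local minimality of $u_\lambda$ from Lemma \ref{lem:locmin}, one would get $I_\lambda(w_\lambda) = I_\lambda(u_\lambda)$ with $w_\lambda$ a second local minimizer at distance $< r$, and then a standard deformation/mountain-pass-type argument on the annulus contradicts the definition of $r_0$ — but the strong-convergence route above is cleaner.) The main obstacle is the first step: controlling the critical nonlinearity well enough to run the Brezis–Lieb argument on the \emph{constrained} Ekeland sequence, in particular checking that the truncation functions $(\varphi_{k,\varepsilon})_-$ used in Lemma \ref{lem:casoa1} do not spoil the energy bookkeeping and that the threshold $\tfrac{1}{Q}S_\G^{Q/2}$ genuinely separates the level $c$ from possible concentration; this mirrors \cite[Lemma 4.2]{BiGaVe} but must be adapted to the concave term $\lambda u^q$ and the comparison $w_\lambda \ge u_\lambda$.
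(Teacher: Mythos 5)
Your overall architecture is the same as the paper's: reduce the claim to the strong convergence $w_k\to w_\lambda$ in $S^1_0(\Omega)$ (after which i)--ii) of \eqref{eq:EkelandCaseA} give $\|w_k-u_\lambda\|_{S^1_0(\Omega)}\to r$ and hence the conclusion), and obtain the strong convergence from a Brezis--Lieb splitting combined with the fact that in case (1) the Ekeland level is $I_\lambda(u_\lambda)$ while $I_\lambda(w_\lambda)\geq I_\lambda(u_\lambda)$. The gap is in your central step. Writing $z_k:=w_k-w_\lambda$, testing \eqref{eq:ineq2.21haitao} with $w=u_\lambda$ and with $w=w_\lambda$ produces two inequalities of the \emph{same} sense: in both cases, using that $w_\lambda$ is a weak solution (Lemma \ref{lem:casoa1}), weak convergence and the compact embeddings, one only obtains $\|z_k\|_{S^1_0(\Omega)}^2\leq \|z_k\|_{L^{2^\star_Q}(\Omega)}^{2^\star_Q}+o(1)$. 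You cannot ``subtract'' two inequalities of the same sense, and neither test gives the reverse bound, so your claimed identity $\|z_k\|_{S^1_0(\Omega)}^2=\int_\Omega|z_k|^{2^\star_Q}+o(1)$ is not established. This is not cosmetic: your contradiction needs a \emph{lower} bound on the leftover energy $\tfrac12\|z_k\|_{S^1_0(\Omega)}^2-\tfrac{1}{2^\star_Q}\int_\Omega|z_k|^{2^\star_Q}$, and with only the ``$\leq$'' half this quantity could a priori be very negative, so neither the dichotomy ``$\ell=0$ or $\ell\geq S_\G^{Q/2}$'' nor the comparison with $c=I_\lambda(u_\lambda)$ can be run.

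The missing ingredient is a test function lying \emph{above} $w_k$: take $w=2w_k\in H_\lambda$ in \eqref{eq:ineq2.21haitao}, which yields the approximate Nehari inequality \eqref{eq:usando_2wk}; subtracting the exact Nehari identity satisfied by the solution $w_\lambda$ (i.e. \eqref{eq:wlambda_is_sol}) and using the splittings gives the reverse bound \eqref{eq:vlambda1}, $\|z_k\|_{S^1_0(\Omega)}^2\geq\|z_k\|_{L^{2^\star_Q}(\Omega)}^{2^\star_Q}+o(1)$. Once this is in place your bookkeeping closes, and in fact more simply than you propose: in case (1) the threshold $\tfrac1Q S_\G^{Q/2}$ is not needed, since combining \eqref{eq:vlambda1} with the energy estimate \eqref{eq:vlambda2} gives $\bigl(\tfrac12-\tfrac{1}{2^\star_Q}\bigr)\|z_k\|_{S^1_0(\Omega)}^2\leq o(1)$ directly, which is exactly the paper's argument. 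A minor point: $I_\lambda(w_\lambda)\geq I_\lambda(u_\lambda)$ should not be justified by saying that $u_\lambda$ minimizes over $H_\lambda$ (it is only a \emph{local} minimizer); the correct justification is weak lower semicontinuity, which gives $\|w_\lambda-u_\lambda\|_{S^1_0(\Omega)}\leq r+\bar r<r_0$ so that Lemma \ref{lem:locmin} applies (the paper phrases this as a harmless ``without loss of generality'', since otherwise $w_\lambda$ is already a second solution).
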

\begin{proof} 
We notice first that it is enough to prove that 
\begin{equation}\label{eq:stronconv}
	w_k\to w_\lambda\ \m{strongly in }S_0^1(\Omega)\ \m{as}\ k\to+\infty.
\end{equation}
Indeed, by using that $\|u_k-u_\lambda\|_{S^{1}_{0}(\Omega)}=r$ for any $k$,
we get
\[
r-\|w_k-u_k\|_{S^{1}_{0}(\Omega)} \leq \|w_k-u_\lambda\|_{S^{1}_{0}(\Omega)} \leq r+\|w_k-u_k\|_{S^{1}_{0}(\Omega)}.
\]
Combining the latter with both the strong convergence in $S_{0}^{1}(\Omega)$ and \eqref{eq:EkelandCaseA}-ii),
implies 
$$\|w_\lambda-u_\lambda\|_{S^{1}_{0}(\Omega)}=r.$$
Let us now proceed with the proof of \eqref{eq:stronconv}. 
In view of \eqref{eq:limitvkCaseA}, and arguing as in the proof of \cite[Lemma 4.3]{BiGaVe}, it follows that

%
\begin{align}
	\|w_k - w_\lambda\|_{L^{q+1}(\Omega)} &\to 0 \quad \textrm{ as } k \to +\infty, \label{eq:case1concl1}\\
	\|w_k\|^{2^\star_Q}_{L^{2^\star_Q}}(\Omega)&=\|w_\lambda\|_{L^{2^\star_Q}(\Omega)}^{2^\star_Q}+
	\|w_k-w_\lambda\|_{L^{2^\star_Q}(\Omega)}^{2^\star_Q}+o(1),\\
	\|w_k\|_{S^{1}_{0}(\Omega)}^2&=\|w_\lambda\|_{S^{1}_{0}(\Omega)}^2+\|w_k-w_\lambda\|_{S^{1}_{0}(\Omega)}^2+o(1).\label{eq:case1concl2}
\end{align}
In particular, from \eqref{eq:case1concl1} we get
\begin{equation}\label{eq:case1concl1bis}
	\int_\Omega w_k^{q+1}=\int_\Omega w_\lambda^{q+1}+o(1), \quad \textrm{ as } k \to +\infty. 
\end{equation}
%
%
Therefore, choosing $w=w_\lambda \in H_\lambda$ in \eqref{eq:ineq2.21haitao}, we obtain
\begin{equation}
	\begin{aligned}
		\|w_k - w_{\lambda}\|_{S^{1}_{0}(\Omega)}^2 &= -\int_{\Omega} \langle \nabla_{\mathbb{G}}w_k, \nabla_{\mathbb{G}}(w_{\lambda}-w_{k})\rangle_{\mathfrak{g}_1} + \int_{\Omega} \langle \nabla_{\mathbb{G}}w_{\lambda}, \nabla_{\mathbb{G}}(w_{\lambda}-w_{k})\rangle_{\mathfrak{g}_1}\\
		&\leq \dfrac{1}{k}\|w_{\lambda}-w_{k}\|_{S^{1}_{0}(\Omega)} + \int_{\Omega}w_{k}^{2\star_{Q}-1}(w_k - w_{\lambda}) + \lambda \int_{\Omega}w_{k}^{q}(w_k - w_{\lambda})\\
		&\qquad + \int_{\Omega} \langle \nabla_{\mathbb{G}}w_{\lambda}, \nabla_{\mathbb{G}}(w_{\lambda}-w_{k})\rangle_{\mathfrak{g}_1}\\
		&=\int_{\Omega}w_{k}^{2^{\star}_{Q}-1}(w_{\lambda}-w_k) + \lambda \int_{\Omega}w_{k}^{q+1} - \lambda \int_{\Omega}w_{k}^{q}w_{\lambda} +o(1)\\
		&=\|w_k-w_{\lambda}\|^{2^{\star}_{Q}}_{L^{2^{\star}_{Q}-1}(\Omega)} + \|w_{\lambda}\|^{2^{\star}_{Q}}_{L^{2^{\star}_{Q}-1}(\Omega)}-\int_{\Omega}w_{k}^{2^{\star}_{Q}-1}w_{\lambda} \\
		&\qquad + \lambda \int_{\Omega}w_{k}^{q+1} - \lambda \int_{\Omega}w_{k}^{q}w_{\lambda} +o(1)\\
		&=\|w_k-w_{\lambda}\|^{2^{\star}_{Q}}_{L^{2^{\star}_{Q}-1}(\Omega)} + o(1) \quad \textrm{ as } k \to +\infty.
	\end{aligned}
\end{equation}

\noindent To proceed further, we choose $w=2w_k\in H_\lambda$,
yielding
\begin{equation}\label{eq:usando_2wk}
\|w_k\|_{S^{1}_{0}(\Omega)}^2-\|w_k\|_{L^{2^\star_Q}(\Omega)}^{2^\star_Q}-\lambda\int_\Omega w_k^{q+1}\geq-\frac{1}{k}\|w_k\|_{S^{1}_{0}(\Omega)}^2=o(1).
\end{equation}
Since $w_{\lambda}$ is actually a solution of \eqref{EqProblem}, we get
\begin{equation}\label{eq:wlambda_is_sol}
\|w_\lambda\|_{S^{1}_{0}(\Omega)}^2-\|w_\lambda\|_{L^{2^\star_Q}(\Omega)}^{2^\star_Q}-\lambda\int_\Omega w_k^{1-\gamma}\geq o(1)\ \ \m{as}\ k\to+\infty.
\end{equation}
Now, combining \eqref{eq:usando_2wk} with \eqref{eq:wlambda_is_sol} yields
 \begin{equation}\label{eq:vlambda1}
 	\|w_k-w_\lambda\|_{S^{1}_{0}(\Omega)}^2\geq \|w_k-w_\lambda\|_{L^{2^\star_Q}(\Omega)}^{2^\star_Q}+o(1)\ \ \m{as}\ k\to+\infty.
 \end{equation}
 Assuming without loss of generality that $I_\lambda(u_\lambda)\leq I_\lambda(w_\lambda)$,
 from \eqref{eq:EkelandCaseA} and \eqref{eq:case1concl1}-\eqref{eq:case1concl2}
 we obtain
 \[
 \begin{split}
 	I_\lambda(w_k-w_\lambda)&=I_\lambda(w_k)-I_\lambda(w_\lambda)+o(1)\\
 	&\leq I_\lambda(u_\lambda)-I_\lambda(w_\lambda)+\frac{1}{k^2}+o(1)\\
 	&=o(1)\ \ \m{as}\ k\to+\infty,
 \end{split}
 \]
 which, together with \eqref{eq:case1concl1}, gives
 \begin{equation}\label{eq:vlambda2}
 	\frac{1}{2}\|w_k-w_\lambda\|_{S^{1}_{0}(\Omega)}^2-\frac{1}{2^\star_Q}\|w_k-w_\lambda\|^{2^\star_Q}_{L^{2^\star_Q}(\Omega)}=
 	I_\lambda(w_k-w_\lambda)+\frac{\lambda}{q+1}\int_\Omega|w_k-w_\lambda|^{q+1}\leq o(1).
 \end{equation}
 From \eqref{eq:vlambda1} and \eqref{eq:vlambda2}
 we finally conclude
 \[
 \lim_{k\to+\infty}\|w_k-w_\lambda\|_{L^{2^\star_Q}(\Omega)}^{2^\star_Q}=\lim_{k\to+\infty}\|w_k-w_\lambda\|_{S^{1}_{0}(\Omega)}^2=0,
 \]
 proving \eqref{eq:stronconv}. This closes the proof.
\end{proof}

Combining Lemma \ref{lem:casoa1} with Lemma \ref{lem:casoa2} yields the existence of a second 
solution $w_\lambda$ to \eqref{EqProblem} for every $\lambda \in (0,\Lambda)$ in the case that
for every $r\in (0,r_0)$,
\[
\inf_{\|u-u_\lambda\|_{S_{0}^{1}(\Omega)}=r}I_\lambda(u)=I_\lambda(u_\lambda).
\]

\subsection{Second case} \label{subsec:CaseII}


%


As before, we consider the space $H_\lambda$ introduced in \eqref{def:hlambda} and
$C([0,1],H_\lambda)$ the space of continuous curves endowed with the following distance,
\begin{equation}\label{maxdistance}
d(\eta,\eta')=\max_{t\in[0,1]}\|\eta(t)-\eta'(t)\|_{S^1_0(\Omega)}.
\end{equation}
Moreover, we consider the following subspace
\begin{equation}\label{def:gammal}
\Gamma_\lambda:=\left\{\eta\in C([0,1],H_\lambda):\begin{array}{l}
	 \eta(0)=u_\lambda,\\[0.1cm] 
	\|\eta(1)-u_\lambda\|_{S^{1}_{0}(\Omega)}>r_1,\\[0.1cm]
	I_\lambda(\eta(1))<I_\lambda(u_\lambda)
\end{array}
\right\}.
\end{equation}

\noindent We first show that $\Gamma_\lambda\neq \varnothing$
and then we provide an estimate of the minimax level
\[
\ell_0:=\inf_{\eta\in\Gamma_\lambda}\max_{t\in[0,1]}I_\lambda(\eta(t)).
\]

\noindent Once this is done, we will apply once again the Ekeland's variational principle.

\noindent We consider the family of functions $U_\varepsilon$ defined
at \eqref{eq:def_u_varepsilon}.
For any $a\in \G$, we take
\[
U_{\varepsilon,a}(g):=U_\varepsilon(a^{-1}\diamond g)
= {\varphi(a^{-1}\diamond g) T_{\varepsilon}(a^{-1}\diamond g)}.
\]
We recall that $\{T_\varepsilon\}$ is a family
of functions defined in \eqref{eq:Minimi_riscalati} such that  $T_1=T$
is a minimizer of the Sobolev Inequality \eqref{eq:Sobolev_Ineq}.

\begin{lemma}\label{lem:claim2.7haitao}
	There exists $\varepsilon_0>0$, $a\in\Omega$ and $R_0\geq1$ such that
\begin{align}
	 & I_\lambda(u_\lambda + R U_{\varepsilon,a}) < I_\lambda(u_\lambda) 
	&& \forall \varepsilon \in (0, \varepsilon_0),\ \forall R \geq R_0, \\
& I_\lambda(u_\lambda + t R_0 U_{\varepsilon,a}) < I_\lambda(u_\lambda) + \frac{1}{Q} S_\G^{Q/2} 
	&& \forall t \in [0, 1],\ \forall \varepsilon \in (0, \varepsilon_0),\label{eq:lemma2.7Haitao}
\end{align}
where $S_\G$ is the best Sobolev constant defined as in  \eqref{eq:def_best_constant}.
\end{lemma}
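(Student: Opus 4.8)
The plan is to perform a direct Taylor-type expansion of $I_\lambda(u_\lambda + tRU_{\varepsilon,a})$ in the spirit of Brezis--Nirenberg and Tarantello, using the asymptotics of Lemma~\ref{lem:Stime_Talentiane_modificate} together with the fact that $u_\lambda$ solves \eqref{EqProblem}$_\lambda$. First I would fix a point $a\in\Omega$ and a radius $R>0$ with $B_R(a)\subset\Omega$, so that the cut-off construction of $U_{\varepsilon,a}$ makes sense and $U_{\varepsilon,a}$ is supported away from $\partial\Omega$; by left-invariance of $\nabla_{\mathbb G}$ and the Haar measure the estimates \eqref{eq:Stima_grad_u_varepsilon}--\eqref{eq:Stima_u_varepsilon} hold verbatim for $U_{\varepsilon,a}$. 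Expanding the quadratic term gives
\[
\tfrac12\|u_\lambda + tR U_{\varepsilon,a}\|_{S_0^1}^2
= \tfrac12\|u_\lambda\|_{S_0^1}^2 + tR\,\langle u_\lambda, U_{\varepsilon,a}\rangle_{S_0^1}
+ \tfrac{t^2R^2}{2}\|U_{\varepsilon,a}\|_{S_0^1}^2,
\]
and since $u_\lambda$ is a weak solution one has $\langle u_\lambda, U_{\varepsilon,a}\rangle_{S_0^1} = \lambda\int_\Omega u_\lambda^q U_{\varepsilon,a} + \int_\Omega u_\lambda^{2^\star_Q-1}U_{\varepsilon,a}$, which lets me cancel the linear contribution against the first-order parts of the nonlinear terms.

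Next I would treat the nonlinear terms. For the critical term I use the elementary inequality $(a+b)^{2^\star_Q}\geq a^{2^\star_Q} + b^{2^\star_Q} + 2^\star_Q\, a^{2^\star_Q-1}b$ with $a = u_\lambda$, $b = tRU_{\varepsilon,a}$ (and symmetrically), so that $-\tfrac{1}{2^\star_Q}\int_\Omega (u_\lambda + tRU_{\varepsilon,a})^{2^\star_Q}$ is bounded above by $-\tfrac{1}{2^\star_Q}\int_\Omega u_\lambda^{2^\star_Q} - \tfrac{(tR)^{2^\star_Q}}{2^\star_Q}\int_\Omega U_{\varepsilon,a}^{2^\star_Q} - (tR)\int_\Omega u_\lambda^{2^\star_Q-1}U_{\varepsilon,a}$. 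For the concave term, convexity of $s\mapsto s^{q+1}$ is the wrong direction, but since $q+1<2$ one has $(a+b)^{q+1}\leq a^{q+1} + (q+1)a^q b + C b^{q+1}$ for $a,b\geq0$; the first two terms again cancel against $I_\lambda(u_\lambda)$ and the linear remainder, and the error $\int_\Omega U_{\varepsilon,a}^{q+1}$ is $O(\varepsilon^{(q+1)(Q-2)/2})$ by \eqref{eq:Stima_Bonf_Ugu}, which is negligible compared with the leading critical term. Collecting everything, I obtain
\[
I_\lambda(u_\lambda + tRU_{\varepsilon,a}) \leq I_\lambda(u_\lambda)
+ \tfrac{(tR)^2}{2}\bigl(S_\G^{Q/2}+O(\varepsilon^{Q-2})\bigr)
- \tfrac{(tR)^{2^\star_Q}}{2^\star_Q}\bigl(S_\G^{Q/2}+O(\varepsilon^{Q})\bigr)
+ o(1),
\]
uniformly in $t\in[0,1]$. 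The function $\sigma\mapsto \tfrac{\sigma^2}{2}S_\G^{Q/2} - \tfrac{\sigma^{2^\star_Q}}{2^\star_Q}S_\G^{Q/2}$ is bounded above on $[0,\infty)$ by its maximum, attained at $\sigma=1$, whose value is exactly $\bigl(\tfrac12-\tfrac{1}{2^\star_Q}\bigr)S_\G^{Q/2} = \tfrac1Q S_\G^{Q/2}$; this yields \eqref{eq:lemma2.7Haitao} after choosing $\varepsilon_0$ small. Moreover for $\sigma$ large the expression tends to $-\infty$, so there is $R_0\geq1$ with $\tfrac{R_0^2}{2}S_\G^{Q/2} - \tfrac{R_0^{2^\star_Q}}{2^\star_Q}S_\G^{Q/2} < 0$, and after shrinking $\varepsilon_0$ further the first displayed inequality of the Lemma holds for all $R\geq R_0$.

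The main obstacle I anticipate is controlling the \emph{cross terms involving $u_\lambda$ at orders between linear and top order}, namely showing that $\int_\Omega u_\lambda^{2^\star_Q-1}U_{\varepsilon,a}$ and related quantities are $o(\varepsilon^{(Q-2)/2})$ or at least $o(1)$ relative to the leading balance; here one must use that $u_\lambda\in L^\infty(\Omega)$ (Theorem~\ref{thm:Sublinear_Problem} gives this for $\underline u_\lambda$, and $u_\lambda\leq u_{\lambda'}$ which is bounded by elliptic regularity away from the characteristic set) together with $\int_\Omega U_{\varepsilon,a} = O(\varepsilon^{(Q-2)/2})$, which follows from \eqref{eq:Stima_Bonf_Ugu}. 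A secondary delicate point is that the splitting inequalities above produce the right cancellation only if the linear term $\langle u_\lambda,U_{\varepsilon,a}\rangle_{S_0^1}$ is genuinely matched; one has to be slightly careful that the concave contribution enters with a favourable sign because $q+1<2$, so that no term of order $\varepsilon^{(Q-2)/2}$ survives with the wrong sign. Once these error estimates are in place, the conclusion is just the elementary one-variable analysis of $\sigma\mapsto \tfrac{\sigma^2}{2}-\tfrac{\sigma^{2^\star_Q}}{2^\star_Q}$ described above.
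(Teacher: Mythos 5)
There is a genuine gap, and it is exactly at the heart of this Brezis--Nirenberg/Tarantello-type lemma: you discard the interaction term $\int_\Omega u_\lambda\,U_{\varepsilon,a}^{2^\star_Q-1}$ (your elementary inequality for $(a+b)^{2^\star_Q}$ keeps only the cross term $a^{2^\star_Q-1}b$, which cancels the linear part, and everything else is swept into ``$+o(1)$''). With that, your collected estimate only yields
\begin{equation*}
I_\lambda(u_\lambda+tRU_{\varepsilon,a})\;\leq\; I_\lambda(u_\lambda)
+\frac{\sigma^2}{2}\bigl(S_\G^{Q/2}+O(\varepsilon^{Q-2})\bigr)
-\frac{\sigma^{2^\star_Q}}{2^\star_Q}\bigl(S_\G^{Q/2}+O(\varepsilon^{Q})\bigr)+o(1),
\qquad \sigma=tR,
\end{equation*}
whose maximum in $\sigma$ is $\tfrac1Q S_\G^{Q/2}+O(\varepsilon^{Q-2})+o(1)$, with error terms of uncontrolled sign (the cut-off correction in \eqref{eq:Stima_grad_u_varepsilon} is typically \emph{positive}). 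This gives at best ``$\leq \ldots + o(1)$'', not the strict inequality \eqref{eq:lemma2.7Haitao}, and strictness (indeed a definite gap for fixed $\varepsilon$) is what is used later in \eqref{eq:Stima_l0_alto}--\eqref{eq:sumup2}. The paper's proof keeps precisely the term you drop: expanding the critical term one gains $-c\,(tR)^{2^\star_Q-1}\int_\Omega u_\lambda U_{\varepsilon,a}^{2^\star_Q-1}=-c\,K\,\varepsilon^{(Q-2)/2}+o(\varepsilon^{(Q-2)/2})$ with $K>0$ (here the lower bound \eqref{eq:Stima_Loiudice} and the positivity of $u_\lambda$ on the support of the bubble are used), and since $\varepsilon^{(Q-2)/2}\gg\varepsilon^{Q-2},\,\varepsilon^{Q},\,\varepsilon^{(q+1)(Q-2)/2}$, this negative contribution dominates all errors and pushes the maximum strictly below $I_\lambda(u_\lambda)+\tfrac1Q S_\G^{Q/2}$. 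Note that your anticipated ``main obstacle'' is backwards: the point is not to show $\int_\Omega u_\lambda U_{\varepsilon,a}^{2^\star_Q-1}$ is negligible, but to exploit it as the leading-order negative correction.

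A secondary, fixable slip concerns the concave term: since it enters $I_\lambda$ with a minus sign, your upper bound $(a+b)^{q+1}\leq a^{q+1}+(q+1)a^qb+Cb^{q+1}$ goes in the wrong direction for an upper bound on $I_\lambda$. Convexity of $s\mapsto s^{q+1}$ (which you dismissed) is in fact exactly what is needed: $(u_\lambda+s)^{q+1}\geq u_\lambda^{q+1}+(q+1)u_\lambda^{q}s$ for $s\geq0$ shows that the concave contribution plus the corresponding linear remainder is $\leq 0$, with no error term at all -- this is the paper's observation that the term \eqref{eq:A2} is nonpositive. The first inequality of the Lemma (for $R\geq R_0$) does follow from your scheme, but the second, which is the substantive one, does not without reinstating the $\varepsilon^{(Q-2)/2}$ interaction term.
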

\begin{proof}
Following the computations already developed in \cite{BiGaVe},
\begin{align*}
    I_\lambda(u+tRU_{\varepsilon,a})&=
    \frac{1}{2}\int_{\Omega}|\nabla_{\G}u|^2-\frac{1}{2^\star_Q}\int_{\Omega} |u|^{2^\star_Q}-\frac{\lambda}{q+1}\int_{\Omega} |u|^{q+1}\\
    &\ \ \ \ +tR \left(\int_{\Omega}\langle \nabla_{\G}u,\nabla_{\G}U_{\varepsilon,a}\rangle_{\mathfrak{g}_{1}}-\int_{\Omega} u^{2^\star_Q -1}U_{\varepsilon,a}-\lambda\int u^qU_{\varepsilon,a}\right)\tag{C}\label{eq:C}\\
    &\ \ \ \ -\frac{1}{2^\star_Q}\left(\int|u+tRU_{\varepsilon,a}|^{2^\star_Q}-\int_{\Omega} |u|^{2^\star_Q}\right)+tR\, \int_{\Omega} u^{2^\star_Q -1}U_{\varepsilon,a}\tag{A}\label{eq:A1}\\
     &\ \ \ \ -\frac{\lambda}{q+1}\left(\int_{\Omega}|u+tRU_{\varepsilon,a}|^{q+1}-\int_{\Omega} |u|^{q+1}\right)+ \lambda tR\, \int_{\Omega} u^qU_{\varepsilon,a}\tag{A2}\label{eq:A2}\\
    &\ \ \ \ +\frac{t^2R^2}{2} \int_{\Omega}|\nabla_{\G}U_{\varepsilon,a}|^2.\tag{B}\label{eq:B}
\end{align*}
Taking the limit for $\varepsilon\to0^+$, we 
start by estimating~\eqref{eq:A2}.
We observe that 
\[
tRu^{2^\star_Q -1}U_{\varepsilon,a}=\left.\left((u+s)^{2^\star_Q}\right)'\right|_{s=tRU_{\varepsilon,a}}.
\]
Therefore, by  strict convexity the term \eqref{eq:A2} must be negative.

For the other terms, we take analogous estimates to the ones developed
in \cite{BiGaVe}. In particular, \eqref{eq:C} is null by definition of
weak solution with $U_{\varepsilon,a}$ as test function,
while the other terms have been estimated in \cite[Lemma 4.3]{BiGaVe}.
We resume,
\begin{align*}
    \eqref{eq:C}&\ =0,\\
\eqref{eq:A1}&\ =-\frac{A\, t^{2^\star_Q}R^{2^\star_Q}}{2^\star_Q}-t^{2^\star_Q -1}R^{2^\star_Q -1}\, K\, \varepsilon^{\frac{Q-2}{2}}+o\left(\varepsilon^{\frac{Q-2}{2}}\right),\\
    \eqref{eq:B}&\ =\frac{B\, t^2R^2}{2}+o\left(\varepsilon^{\frac{Q-2}{2}}\right),
\end{align*}
for suitable positive constants $A,B,K$.
In particular,
\begin{align*}
	A&=\|T\|_{L^{2^\star_Q}(\G)},\\
	B&=\||\nabla_\G T|\|_{L^2(\G)}^2,
\end{align*}
and both can be estimated with the Sobolev constant
as done in Lemma \ref{lem:Stime_Talentiane_modificate}.
Moreover, $K$ is a constant depending on $a$ with the property that
\[
\int_\Omega uU_{\varepsilon,a}^{2^\star_Q -1}=K\varepsilon^{(Q-2)\slash 2}+o(\varepsilon^{(Q-2)\slash2}).
\]

To conclude the proof of the lemma 
we follow the approach of \cite{Tarantello}.
We slightly change the notation by posing
 $s:=tR$ and $S:=\left(\frac{B}{A}\right)^{\frac{1}{2^\star_Q -2}}$,
and, we introduce
\[
f_\varepsilon(s):=\frac{B\, s^2}{2}-\frac{A \, s^{2^\star_Q}}{2^\star_Q}-s^{2^\star_Q -1}\, K\, \varepsilon^n.
\]
As \eqref{eq:A2} is negative, we have
\begin{equation}\label{disIf}
	I_\lambda(u+tRU_{\varepsilon,a})<f_\varepsilon(tR)+o\left(\varepsilon^{\frac{Q-2}{2}}\right).
\end{equation}

With the same calculations developed in \cite{BiGaVe}
we get
%
%
\[
I_\lambda(u+tRU_{\varepsilon,a})<I_\lambda(u)+\left(\frac{1}{2}-\frac{1}{2^\star_Q}\right)\cdot \frac{B^{\frac{2^\star_Q}{2^\star_Q -2}}}{A_1^{\frac{2}{2^\star_Q -2}}}-S^{2^\star_Q -1}K\varepsilon^{\frac{Q-2}{2}}
+o\left(\varepsilon^{\frac{Q-2}{2}}\right),
\]
and this allows to conclude because $\frac{1}{2}-\frac{1}{2^\star_Q}=\frac{1}{Q}$
and $\frac{B}{A_1^\frac{2}{2^\star_Q}}=S_{\G}$. This closes the proof.
\end{proof}

We notice that by Lemma \ref{lem:claim2.7haitao}, $\Gamma_\lambda\neq \varnothing$ and moreover $\Gamma_\lambda$ is a complete metric space endowed
with the distance~\eqref{maxdistance}. This follows simply by the fact that
\begin{equation}\label{eq:eta}
	\widetilde \eta(t) := u_\lambda+tR_0U_{\varepsilon,a}\in \Gamma_\lambda\quad\text{for all $\varepsilon\in (0,\varepsilon_0)$},
\end{equation}
\noindent eventually enlarging $R_0$.
We now proceed by showing that problem \eqref{EqProblem} do actually admit a second solution.

\begin{teo}\label{thm:casob}
	For $0<\lambda<\Lambda$, if there exists $r\in (0,r_0)$ such that 
	\[
	\inf_{\|u-u_\lambda\|_{S_{0}^{1}(\Omega)}=r}I_\lambda(u)>I_\lambda(u_\lambda),
	\]
	then there exists a solution $v_\lambda$ to \eqref{EqProblem}
	such that $v_\lambda \not\equiv u_\lambda$.\newline
\end{teo}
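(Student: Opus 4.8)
The plan is to apply a mountain-pass-type scheme via Ekeland's variational principle on the complete metric space $(\Gamma_\lambda,d)$, exactly as in the scheme of Tarantello \cite{Tarantello} adapted to the Carnot group setting. By the standing hypothesis of Case II and Lemma \ref{lem:claim2.7haitao}, the class $\Gamma_\lambda$ is nonempty (the curve $\widetilde\eta$ in \eqref{eq:eta} belongs to it) and the minimax value $\ell_0$ is well defined; moreover the geometry of the problem forces
\[
I_\lambda(u_\lambda) < \inf_{\|u-u_\lambda\|_{S^1_0(\Omega)}=r} I_\lambda(u) \leq \ell_0,
\]
since every $\eta\in\Gamma_\lambda$ must cross the sphere of radius $r$ around $u_\lambda$ inside $H_\lambda$ (by $\eta(0)=u_\lambda$, $\|\eta(1)-u_\lambda\|>r_1\geq r$ and continuity). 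The crucial quantitative input is the strict upper bound
\[
\ell_0 < I_\lambda(u_\lambda) + \tfrac{1}{Q}S_\G^{Q/2},
\]
which follows from \eqref{eq:lemma2.7Haitao} applied along the curve $\widetilde\eta$: maximizing $I_\lambda(u_\lambda+tR_0U_{\varepsilon,a})$ over $t\in[0,1]$ stays strictly below the threshold for $\varepsilon$ small enough.

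Next I would apply Ekeland's variational principle \cite{Ekeland} to the functional $\eta\mapsto \max_{t\in[0,1]}I_\lambda(\eta(t))$ on $\Gamma_\lambda$, producing an almost-minimizing sequence of curves, and then — via a standard deformation/Palais–Smale-type argument — extract a sequence $\{v_n\}\subset H_\lambda$ with $I_\lambda(v_n)\to \ell_0$ and $I_\lambda'(v_n)\to 0$ in the appropriate sense (restricted to variations keeping one inside $H_\lambda$, i.e.\ a one-sided condition as in the proof of Lemma \ref{lem:casoa1}). Boundedness of $\{v_n\}$ in $S^1_0(\Omega)$ follows by combining the energy bound with the almost-critical relation, as in \eqref{eq:Ilambdakneg}–\eqref{eq:testwithukzero}; hence up to a subsequence $v_n\rightharpoonup v_\lambda$ weakly in $S^1_0(\Omega)$, strongly in $L^p(\Omega)$ for $p<2^\star_Q$, and a.e.\ in $\Omega$. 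Arguing verbatim as in Lemma \ref{lem:casoa1} (testing with $(\varphi_{n,\varepsilon})_-$ type functions, using dominated convergence on the subcritical and critical lower-order terms and weak lower semicontinuity on the gradient term) shows that $v_\lambda$ is a weak solution of \eqref{EqProblem}$_\lambda$, and since $v_n\geq u_\lambda$ a.e.\ we also get $v_\lambda\geq u_\lambda>0$ a.e.\ in $\Omega$.

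It then remains to rule out $v_\lambda\equiv u_\lambda$. This is where the energy threshold is used: if $v_\lambda=u_\lambda$, then $v_n\rightharpoonup u_\lambda$ and one performs the standard concentration-compactness splitting, writing $v_n = u_\lambda + z_n$ with $z_n\rightharpoonup 0$, and using the Brezis–Lieb lemma together with the fact that $u_\lambda$ is itself a solution to obtain
\[
I_\lambda(v_n) = I_\lambda(u_\lambda) + \tfrac12\|z_n\|_{S^1_0(\Omega)}^2 - \tfrac{1}{2^\star_Q}\|z_n\|_{L^{2^\star_Q}(\Omega)}^{2^\star_Q} + o(1),
\]
while the almost-criticality gives $\|z_n\|_{S^1_0(\Omega)}^2 - \|z_n\|_{L^{2^\star_Q}(\Omega)}^{2^\star_Q} = o(1)$. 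If $\|z_n\|_{S^1_0(\Omega)}\to 0$ then $I_\lambda(v_n)\to I_\lambda(u_\lambda)<\ell_0$, a contradiction; otherwise, after passing to a further subsequence, $\|z_n\|_{S^1_0(\Omega)}^2\to L>0$ and by the Sobolev inequality \eqref{eq:Sobolev_Ineq} one forces $L\geq S_\G^{Q/2}$, whence
\[
\ell_0 = \lim_n I_\lambda(v_n) = I_\lambda(u_\lambda) + \Big(\tfrac12-\tfrac{1}{2^\star_Q}\Big)L \geq I_\lambda(u_\lambda) + \tfrac{1}{Q}S_\G^{Q/2},
\]
contradicting the strict bound $\ell_0 < I_\lambda(u_\lambda)+\tfrac1Q S_\G^{Q/2}$. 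Hence $v_\lambda\not\equiv u_\lambda$, and since also $v_\lambda\not\equiv u_\lambda$ distinguishes it from the first solution, this produces the desired second solution. The main obstacle I expect is the careful handling of the one-sided variational inequality on $H_\lambda$ (the cone constraint $u\geq u_\lambda$), i.e.\ checking that the Ekeland sequence really yields a genuine weak solution with no boundary/constraint contribution — this is precisely the delicate $(\varphi_{n,\varepsilon})_-$ argument borrowed from \cite{BadTar,BiGaVe} — together with the compactness splitting near the critical energy level, which must be done in the sub-Riemannian setting using only the asymptotic estimates of Lemma \ref{lem:Stime_Talentiane_modificate} rather than explicit extremals.
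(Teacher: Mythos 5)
Your proposal follows essentially the same route as the paper: Ekeland's variational principle on $\Gamma_\lambda$, the one-sided (cone-constrained) almost-critical sequence in $H_\lambda$ obtained as in \cite{BadTar,BiGaVe}, the energy bracket $I_\lambda(u_\lambda)<\ell_0<I_\lambda(u_\lambda)+\tfrac{1}{Q}S_\G^{Q/2}$ coming from the case-(2) hypothesis and Lemma \ref{lem:claim2.7haitao}, and compactness below the critical threshold. The only (inessential) difference is the endgame: you exclude $v_\lambda\equiv u_\lambda$ by a Brezis--Lieb splitting around $u_\lambda$ leading to $\ell_0\geq I_\lambda(u_\lambda)+\tfrac{1}{Q}S_\G^{Q/2}$, whereas the paper proves strong convergence $v_k\to v_\lambda$ \`{a} la Tarantello and concludes $I_\lambda(v_\lambda)=\ell_0>I_\lambda(u_\lambda)$; both rest on the same splitting identities and the same threshold estimate.
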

\begin{proof}
We start by considering the Generalized Directional Derivative
introduced in \cite{BadTar}, 
\[
I^0_\lambda(w;v):=\limsup_{\|h\|\to0^+,\, \rho\to0^+}\frac{I_\lambda(w+h+\rho v)-I_\lambda(w+h)}{\rho}.
\]
Some basic properties of this object are proved in \cite[Appendix]{BiGaVe}.
In the case of object,
\[
I_\lambda^0(w;v)=\int_\Omega\langle \nabla_\G w,\nabla_\G v\rangle_{\mathfrak{g}_1}-\lambda\int_\Omega w^{q}v-\int_\Omega w^{2^\star_Q-1}v.
\]
We recall the functional
\[
\Phi(\eta):= \max_{t\in [0,1]}I_\lambda(\eta(t)).
\]
and its minimax level
\[
\ell_0:=\inf_{\eta\in\Gamma_\lambda}\Phi(\eta).
\]
By Ekeland's Variational Principle, there exists
a sequence $\{\eta_k\}_k\in \Gamma_\lambda$
such that
$$\Phi(\eta_k)\leq \ell_0+\frac{1}{k} \quad \textrm{ and } \quad \Phi(\eta_k)\leq \Phi(\eta)+\frac{1}{k}d(\eta_k,\eta).$$
We now recall the result of \cite[Lemma A.2]{BiGaVe}.
For every $k\in \mathbb{N}$, we define 
\[
\Lambda_k:=\left\{t\in(0,1):\ I_\lambda(\eta_k(t))=\max_{s \in [0,1]}I_\lambda(\eta_k(s))\right\}.
\]
Then, for every $k$ there exists $t_k\in \Lambda_k$ such that 
for $v_k:=\eta_k(t_k)$,
\begin{equation}\label{lemmaimp}
	I^0_\lambda(v_k;w-v_k)\geq-\frac{\max(1,\|w-v_k\|)}{k}\quad\textrm{ for every } w\in H_\lambda.
\end{equation}
Resuming what we just presented,
\begin{itemize}
	\item $I_\lambda(v_k)\to\ell_0$ as $k\to+\infty$,
	\item there exists $C>0$ such that $\forall w\in H_\lambda$,
	\begin{equation}\label{eq:badtarfin}
		\int_\Omega\langle \nabla_\G v_k,\nabla_\G(w-v_k)\rangle_{\mathfrak{g}_{1}} -\lambda\int_\Omega v_k^{q}(w-v_k)-\int_\Omega v_k^{2^\star_Q-1}(w-v_k)\geq
		-\frac{C}{k}(1+\|w\|_{S^{1}_{0}(\Omega)}).
	\end{equation}
\end{itemize}
By choosing $w=2v_k$,
we get
\begin{equation}\label{eq:ultima}
\| v_k\|^2_{{S_0^1(\Omega)}}-\|v_k\|^{2^\star_Q}_{L^{2^\star_Q}(\Omega)}-\lambda \|v_k\|^{q+1}_{L^{q+1}(\Omega)}\geq-\frac{C}{k}\max(1,\|v_k\|_{{S_0^1(\Omega)}}).
\end{equation}

\noindent By \eqref{eq:ultima}, and recalling \eqref{eq:Def_Ilambda}, it follows that
\begin{equation*}
	\begin{aligned}
	\dfrac{1}{2}&\| v_k\|^2_{{S_0^1(\Omega)}} - \dfrac{\lambda}{q+1}\|v_k\|^{q+1}_{L^{q+1}(\Omega)} - \dfrac{1}{2^\star_Q}\|v_k\|^{2^\star_Q}_{L^{2^\star_Q}(\Omega)} \\
	&-\dfrac{1}{2^{\star}_{Q}}\left(\| v_k\|^2_{{S_0^1(\Omega)}}-\|v_k\|^{2^\star_Q}_{L^{2^\star_Q}(\Omega)}-\lambda \|v_k\|^{q+1}_{L^{q+1}(\Omega)} +\frac{C}{k}\max(1,\|v_k\|_{{S_0^1(\Omega)}}) \right) \leq I_{\lambda}(v_k).
	\end{aligned}
\end{equation*}

\noindent As said above, $I_\lambda(v_k)\to \ell_0$ for $k\to+\infty$,
and therefore we obtain

\begin{equation}\label{eq:gamma0}
	\ell_0+o(1)\geq \left(\frac{1}{2}-\frac{1}{2^\star_Q}\right)\|v_k\|_{{S_0^1(\Omega)}}^2-\lambda\left(\frac{1}{q+1}-\frac{1}{2^\star_Q}\right)\|v_k\|_{L^{q+1}(\Omega)}^{q+1}.
\end{equation}


Suppose $v_k$ is unbouded in $S^1_0(\Omega)$. As
$\frac{1}{2}-\frac{1}{2^\star_Q}>0$, 
then up to a subsequence we have
 $\|v_k\|_{S^{1}_{0}(\Omega)}\to+\infty$,
contradicting \eqref{eq:gamma0}.
Therefore $v_k$ is bounded
and \eqref{lemmaimp} with
$w=2v_k$ implies
\begin{equation}\label{eq:last}
	\|v_k\|_{S^{1}_{0}(\Omega)}^2-\lambda\int_\Omega v_k^{q+1}-\int_\Omega v_k^{2^\star_Q}\geq
	-\frac{C}{k}(1+2\|v_k\|_{S^{1}_{0}(\Omega)}).
\end{equation}

Now, we can proceed following almost verbatim \cite{BiGaVe}. Arguing as in 
the proofs of Lemmas \ref{lem:casoa1} and \ref{lem:casoa2},
we can prove both that
$v_k$ weakly converges (up to a subsequence) to a weak solution $v_\lambda$
of \eqref{EqProblem} and that
\begin{equation}\label{eq:convergence}
	\|v_k-v_\lambda\|_{S^{1}_{0}(\Omega)}^2-\|v_k-v_\lambda\|_{L^{2^\star_Q}(\Omega)}^{2^\star_Q}=o(1)\ \ \m{as}\ k\to+\infty.
\end{equation}

It only remains to show that $v_\lambda\not \equiv u_\lambda$, i.e. that we truly found a second solution of \eqref{EqProblem}.
To this aim, we notice first that for any $\eta\in \Gamma_\lambda$,
\[
\|\eta(0)-u_\lambda\|_{S^{1}_{0}(\Omega)}=0\ \ \m{and}\ \ \|\eta(1)-u_\lambda\|_{S^{1}_{0}(\Omega)}>r_1,
\]
therefore there exists $t_\eta\in [0,1]$
such that $\|\eta(t_\eta)-u_\lambda\|_{S^{1}_{0}(\Omega)}=r_1$.
Since we are dealing with (2),
we have
\begin{equation}\label{eq:Stima_l0_basso}
\ell_0=\inf_{\Gamma_\lambda}\Phi(\eta)\geq\inf_{\Gamma_\lambda}I_\lambda(\eta(t_\eta))
\geq\inf_{\|u-u_\lambda\|_{S^{1}_{0}(\Omega)}=r_1} I_\lambda(u)>I_\lambda(u_\lambda),
\end{equation}
\noindent where the last infimimum is taken among the $u\in H_\lambda$
such that $\|u-u_\lambda\|_{S^{1}_{0}(\Omega)}=r_1$. On the other hand
if we consider $\widetilde\eta$ defined in \eqref{eq:eta},
then by \eqref{eq:lemma2.7Haitao}
we get,
\begin{equation}\label{eq:Stima_l0_alto}
\ell_0\leq \Phi(\widetilde \eta)=\max_{t\in [0,1]}I_\lambda(\widetilde \eta(t))<
I_\lambda(u_\lambda)+\frac{1}{Q}S_\G^{Q\slash2}.
\end{equation}
Combining \eqref{eq:Stima_l0_basso} with \eqref{eq:Stima_l0_alto} we find
\begin{equation}\label{eq:sumup}
	I_\lambda(u_\lambda)<\ell_0<I_\lambda(u_\lambda)+\frac{1}{Q}S_\G^{Q\slash2}.
\end{equation}
We further stress that, since $v_k\to v_\lambda$ weakly in $S^1_0(\Omega)$, then
the equalities \eqref{eq:case1concl1}-\eqref{eq:case1concl2} still hold true.
Therefore, by \eqref{eq:sumup} and recalling that $I_\lambda(v_k)\to \ell_0$,
we get (for $k$ sufficiently large), that there exists a positive number $\delta_{0}$ such that 
\begin{equation}\label{eq:sumup2}
	\begin{split}
		\frac{1}{2}&\|v_k-v_\lambda\|_{S^{1}_{0}(\Omega)}^2-\frac{1}{2^\star_Q}\|v_k-v_\lambda\|_{L^{2^\star_Q}(\Omega)}^{2^\star_Q}\\
		&=\frac{1}{2}(\|v_k\|_{S^{1}_{0}(\Omega)}^2-\|v_\lambda\|_{S^{1}_{0}(\Omega)}^2)-\frac{1}{2^\star_Q}(\|v_k\|^{2^\star_Q}_{L^{2^\star_Q}(\Omega)})+o(1)\\
		&=I_\lambda(v_k)-I_\lambda(u_\lambda)+o(1)\\
		&=\ell_0-I_\lambda(u_\lambda)+o(1)\\
		&<\frac{1}{Q}S_\G^{Q\slash2}-\delta_0,
	\end{split}
\end{equation}
\noindent and the last term is positive.
From \eqref{eq:convergence}, \eqref{eq:sumup} and \eqref{eq:sumup2},
closely following \cite[Proposition 3.1]{Tarantello},
we get that $v_k\to v_\lambda$ strongly in $S^1_0(\Omega)$.
This, also considering both \eqref{eq:last} and \eqref{eq:gamma0}, gives
\[
I_\lambda(u_\lambda)<\gamma_0=\lim_{k\to+\infty}I_\lambda(v_k)=I_\lambda(v_\lambda),
\]
thus implying that $u_\lambda\not\equiv v_\lambda$. This closes the proof.
\end{proof}

\medskip

Gathering all the results established so far, we can finally provide the
\begin{proof}[Proof of Theorem \ref{thm:main}]
	Let $\Lambda$ be as in \eqref{eq:DefinitionLambda}, that is, 
    $$\Lambda := \sup \{ \lambda >0: \eqref{EqProblem} \textrm{ admits a weak solution}\}.$$
    From its very definition, this shows that \eqref{EqProblem}
    does not admit any weak solution for $\lambda > \Lambda$, which is assertion A).
    Moreover, combining Lemma \ref{lem:lambda0} with Lemma \ref{lem:LambdaInf}, we immediately have that $\Lambda\in (0,+\infty)$.
    Regarding the existence part, Theorem \ref{thm:Existence_First} proves that 
    there exists \emph{at least one weak solution} $u_\lambda$ of
    \eqref{EqProblem} (with $\lambda =\Lambda$), which is assertion B).
    As for assertion C),
    from Lemmas  \ref{lem:casoa1}, \ref{lem:casoa2}  and Theorem \ref{thm:casob},
    we got that a second
    solution always exists when $0<\lambda<\Lambda$.
\end{proof}
\medskip

 \end{document}